\newtheorem{theorem}{Theorem}[section]
\newtheorem{Lem}[theorem]{Lemma}
\newtheorem{corollary}[theorem]{Corollary}
\newtheorem{hypothesis}[theorem]{Hypothesis}
\newtheorem{remarks}[theorem]{Remarks}
\newtheorem{notation}[theorem]{Notation}
\theoremstyle{definition}
\numberwithin{equation}{section}
\begin{document}

\title[A log-free zero-density estimate for  $L$-functions]{A log-free zero-density estimate and small gaps in coefficients of L-functions}

\author{Amir Akbary}
\address{University of Lethbridge, Department of Mathematics and Computer Science, 4401 University Drive, Lethbridge, AB, T1K 3M4, Canada}
\email{amir.akbary@uleth.ca}
\author{Timothy S. Trudgian}
\address{The Australian National University, Mathematical Sciences Institute, 
Canberra, ACT 0200, Australia}
\email{timothy.trudgian@anu.edu.au}

\subjclass[2010]{11M41, 11F30}

\thanks{Research of the authors is partially supported by NSERC. The second author research is also partially supported by ARC}

\keywords{\noindent zero-density estimates, Rankin--Selberg $L$-functions, Fourier coefficients of automorphic forms}


\date{\today}

\begin{abstract}
Let $L(s, \piup\times\piup^\prime)$ be the Rankin--Selberg $L$-function attached to automorphic representations $\piup$ and $\piup^\prime$. Let $\tilde{\piup}$ and $\tilde{\piup}^\prime$ denote the contragredient representations associated to $\piup$ and $\piup^\prime$. Under the assumption of certain upper bounds for  coefficients of the logarithmic derivatives of $L(s, \piup\times\tilde{\piup})$ and   $L(s, \piup^\prime\times\tilde{\piup}^\prime)$, we prove a log-free zero-density estimate for $L(s, \piup\times\piup^\prime)$ which generalises a result due to Fogels in the context of Dirichlet $L$-functions. We then employ this log-free estimate in studying the distribution of the Fourier coefficients of an automorphic representation $\piup$. As an application we examine the non-lacunarity of the Fourier coefficients $b_f(p)$ of a modular newform $f(z)=\sum_{n=1}^{\infty} b_f(n) e^{{2\pi i n z}}$ of weight $k$, level $N$, and character $\chi$. More precisely for $f(z)$ and a prime $p$, set $j_f(p):=\max_{x;~x> p} J_{f} (p, x)$, where
$J_{f} (p, x):=\#\{{\rm prime}~q;~a_{\piup}(q)=0~{\rm for~all~}p<q\leq x\}.$
We prove that $j_f(p)\ll_{f, \theta} p^\theta$ for some $0<\theta<1$. 
\end{abstract}

\maketitle

\section{Introduction and Results}\label{into}
In the absence of powerful zero-free regions for $L$-functions it is worthwhile to study the number of zeroes in rectangles in the complex plane. To this end, for an $L$-function $L(s)$,  one considers the function
\begin{equation*}\label{Nsigdef}
N_{L}(\sigma, T) = \# \left\{\rho = \beta + i\gamma; \, L(\rho) = 0, \, \beta \geq \sigma, \, |\gamma|\leq T\right\}.
\end{equation*}
For a general $L$-function satisfying certain properties it is possible to prove, for $\epsilon>0$, the existence of a positive constant $c$ 
\begin{equation}\label{KPdensity}
N_L(\sigma, T) \ll_{\epsilon} T^{c(1-\sigma) + \epsilon}
\end{equation}
uniformly for $0\leq \sigma \leq 1$ (see \cite[Lemma 3]{KacPer} for details on such a theorem). It is expected that $L(1+it)\neq 0$ for a general $L$-function, which makes the estimation given in Theorem \eqref{KPdensity} trivial when $\sigma\rightarrow 1^{-}$. For many $L$-functions we can replace $T^\epsilon$ in \eqref{KPdensity} by a power of $\log{T}$. It would be desirable to sharpen \eqref{KPdensity} near the line $\sigma=1$ by removing the constant $\epsilon$ in the exponent of $T$. We call such a bound a \emph{log-free zero density estimate}. In this paper we achieve this for certain automorphic $L$-functions. 
In order to proceed  we need to introduce some notation and terminology.

Let $\piup$ be an automorphic cuspidal representation of ${\rm GL}_m(\mathbb{A}_\mathbb{Q})$  with unitary central character (for simplicity we call such $\piup$ an {\em automorphic representation}) and let $L(s, \piup)$ be its associated $L$-function,  which is written as a product of the local $L$-functions $L(s, \piup_p)$. Hence $L(s, \piup) = \prod_{p < \infty} L(s, \piup_{p})$, where, for $\Re(s)>1$, 
$$L(s, \piup_{p}) = \prod_{j=1}^{m} \left(1- \frac{\alpha_{\piup}(j, p)}{p^{s}}\right)^{-1}.$$
The complex numbers $\alpha_\piup(j, p)$, for $j=1, \ldots, m$, are the {\em local parameters at $p$}, where $m$ is the {\em degree} of $L(s, \piup)$. Associated to $\piup$ there is an integer $q_\piup\geq 1$, called the {\em conductor} of $\piup$, such that $\alpha_\piup(j, p)\neq 0$ for $p\nmid q_\piup$ and $1\leq j \leq m$.   The {\em generalised Ramanujan conjecture} (GRC) for $\piup$ (or $L(s, \piup)$)  is the assertion that for all $p\nmid q_\piup$ we have $|\alpha_\piup(j, p)|= 1$, for $j=1, \ldots, m$, and $|\alpha_\piup(j, p)|<1$ otherwise. Any $L(s, \piup)$ is called a {\em principal} $L$-function of ${\rm GL}_m(\mathbb{A}_\mathbb{Q})$. 
It is conjectured that GRC holds for all principal $L$-functions. 

Associated to $\piup$ is its {\em contragredient}, which itself is an automorphic representation. The set of local parameters for $\tilde{\piup}$ coincides with the set of complex conjugates of local parameters for ${\piup}$. So $\{\alpha_{\tilde{\piup}}(j, p)\}=\{\overline{\alpha_\piup(j, p)}\}.$ A representation $\piup$ is called {\em self-dual} if $\{\alpha_{{\tilde{\piup}}}(j, p)\}=\{\alpha_{{\piup}}(j, p)\} $. For a  self-dual $\piup$ the coefficients of the Dirichlet series representation of $L(s, \piup)$ are real.

We set $$ a_{\piup}(p^\ell)=\sum_{j=1}^m \alpha_{\piup}(j, p)^{\ell}.$$
If $n$ is not a perfect square we define $a_\piup(n)=0$. We postulate the following hypothesis regarding the average values of $\Lambda(n)|a_{\piup}(n)|^2$ over short intervals, where $\Lambda(n)$ is the von Mangoldt function.
\begin{hypothesis}
\label{HypothesisA}
There is an $\epsilon_\piup>0$ such that
$$\sum_{X<n\leq X+Y} \Lambda(n) |a_\piup(n)|^2 \ll_\piup Y,$$
whenever $Y\geq X^{1-\epsilon_\piup}$.
\end{hypothesis}
The above hypothesis states that the average size of $\Lambda(n)|a_{\piup}(n)|^2$ stays bounded over intervals of length $Y\geq X^{1-\epsilon_\piup}$. Hypothesis \ref{HypothesisA} is expected to be true for all automorphic representations $\piup$. In fact, under the assumption of GRC we have $$\sum_{X<n\leq X+Y} \Lambda(n) |a_\pi(n)|^2 \leq m^2 \left( \psi(X+Y)-\psi(X)\right),$$
where $\psi(X)$ is the classical Chebyshev function. From the Brun--Titchmarch inequality (see \cite[Theorem 6.6]{IK}) and the prime number theorem we can conclude that $$\psi(X+Y)-\psi(X)\ll Y$$
for $Y\geq X^{\theta}$  with $\theta>1/2$, which establishes Hypothesis \ref{HypothesisA} for any $0<\epsilon_\piup<1/2$.

The \emph{Rankin--Selberg $L$-function} $L(s, \piup \times \piup^\prime)$  associated to automorphic representations $\piup$ and $\piup^\prime$ of ${\rm GL}_m(\mathbb{A}_\mathbb{Q})$ and ${\rm GL}_{m^\prime}(\mathbb{A}_\mathbb{Q})$ is given by the local factors at primes $p$. We have
$$L(s, \piup\times\piup^\prime)=\prod_p L(s, \piup_p \times \piup_p^\prime),$$ 
where, for $\Re(s)>1$,
$$L(s, \piup_{p}\times \piup_{p}') = \prod_{\substack{j, k\\ 1 \leq j \leq m \\1 \leq k \leq m'}} \left(1- \frac{\alpha_{\piup\times\piup'}(j,k, p)}{p^{s}}\right)^{-1}.$$   
If $p\nmid (q_\piup, q_{\piup'})$ we have $\alpha_{\piup\times\piup'}(j,k, p)=\alpha_{\piup}(j, p) \alpha_{\piup'}(k, p),$
where $\alpha_{\piup}(p, j) $ and $\alpha_{\piup^\prime}(p, j) $ denote the local parameters associated to $\piup$ and $\piup^\prime$ (see \cite[p.\ 1460]{B}). We also set
$$ a_{\piup\times\piup'}(p^\ell)=\sum_{j=1}^m \sum_{k=1}^{m'}\alpha_{\piup\times\piup^\prime}(j, k, p)^{\ell}$$
and note that 
$$ a_{\piup\times\piup'}(p^\ell)= a_{\piup}(p^\ell) a_{\piup'}(p^\ell),$$
if $p\nmid (q_\piup, q_{\piup'})$.
The archimedean local factor $L(s,\piup_\infty \times \piup_\infty^\prime)$ is defined by
$$L(s,\piup_\infty \times \piup_\infty^\prime)=\prod_{\substack{j, k\\ 1 \leq j \leq m \\1 \leq k \leq m'}} \Gamma_\mathbb{R}(s+\mu_{\piup\times\piup^\prime} (j, k)),$$
where $\Gamma_\mathbb{R}(s)=\pi^{-s/2} \Gamma(s/2)$ and $\mu_{\piup\times\piup^\prime} (j, k)$ are complex numbers.
We suppose that the central characters of $\piup$ and $\piup^\prime$ are trivial on the product of positive reals when embedded diagonally into the archimedean places of the ideles. Then it is known that the completed $L$-function $$\Phi(s, \piup\times \piup^\prime)=L(s, \piup_\infty \times \piup_\infty^\prime )L(s,\piup\times\piup^\prime)$$ is entire unless $\piup^\prime=\tilde{\piup}$ in which case it has simple poles at $s=0$ and $s=1$. The completed $L$-function $\Phi(s, \piup)$ satisfies the functional equation
\begin{equation}\label{funceg}
\Phi(s, \piup\times\piup^\prime) = \tau(\piup\times \piup^\prime)q_{\piup\times\piup^\prime}^{-s} \Phi(1-s, \tilde{\piup}\times {\tilde{\piup}}^\prime),
\end{equation}
where the integer $q_{\piup\times\piup^\prime}>0$ is the \emph{conductor} of $\piup\times\piup^\prime$, and $\tau(\piup\times \piup^\prime)$, the so-called \emph{root number}, is a complex number of absolute value $1$.
We also define the conductor of $\Phi(s, \piup\times\piup^\prime)$ to be $$Q_{\piup\times\piup^\prime}=q_{\piup\times\piup^\prime}\prod_{\substack{j, k\\ 1 \leq j \leq m \\1 \leq k \leq m'}} (|\mu (j, k)|+2).$$
For more information on automorphic $L$-functions see \cite[Section 5.12]{IK} and references therein.

We set
\begin{equation*}
N_{\piup\times\piup^\prime}(\sigma, T) = \# \left\{\rho = \beta + i\gamma; \, L(\rho, \piup\times\piup^\prime) = 0, \, \beta \geq \sigma, \, |\gamma|\leq T\right\}.
\end{equation*}
Our main result is the following.
\begin{theorem}\label{thm3}
Assume that Hypothesis \ref{HypothesisA} holds for $\piup$ and $\piup^\prime$ and that either $m, m^\prime\leq 2$ or at least one of $\piup$ and $\piup^\prime$ is self-dual. Then there exist positive constants $c$ and $T_0$ (depending on $\piup$ and $\piup'$) such that for any $T\geq T_0\geq \max\{3, Q_{\piup\times\piup^\prime}\}$ we have
\begin{equation*}
N_{\piup\times\piup^\prime}(\sigma, T) \leq T^{c(1-\sigma)}
\end{equation*}
uniformly for $0\leq \sigma \leq 1$.
\end{theorem}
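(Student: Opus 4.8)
The plan is to carry out the Linnik--Fogels method for log-free density theorems, in the streamlined form due to Montgomery and Gallagher, with the large-sieve input over a family of Dirichlet characters replaced by the short-interval second moment supplied by Hypothesis~\ref{HypothesisA}. Throughout write $Q=Q_{\piup\times\piup'}$, $L(s)=L(s,\piup\times\piup')$ and $N(\sigma,T)=N_{\piup\times\piup'}(\sigma,T)$, and recall $Q\le T$. First I would dispose of the range where $\sigma$ is bounded away from $1$: from the functional equation \eqref{funceg}, Stirling's formula and the usual argument-principle count one has $N(0,T)\ll T\log(QT)$ (the simple poles of $\Phi(s,\piup\times\piup')$ at $s=0,1$ in the case $\piup'=\tilde\piup$ being irrelevant to the zero count), so for a small absolute constant $\delta_0>0$ and $0\le\sigma\le 1-\delta_0$ one gets $N(\sigma,T)\le N(0,T)\le T^{c(1-\sigma)}$ as soon as $c\ge 2/\delta_0$ and $T\ge T_0$. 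It then remains to treat $1-\delta_0<\sigma\le 1$, with $\delta_0$ to be chosen.

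\emph{Zero detection.} Let $\lambda(n)$ be the Dirichlet coefficients of $L(s)$, let $\mu(n)$ be its Dirichlet inverse, and for a parameter $Y$ (a small power of $Q$) put $M(s)=\sum_{n\le Y}\mu(n)n^{-s}$, so that $L(s)M(s)=\sum_n\nu(n)n^{-s}$ with $\nu(1)=1$ and $\nu(n)=0$ for $1<n\le Y$. If $\rho=\beta+i\gamma$ is a zero counted by $N(\sigma,T)$, then $L(\rho)=0$ forces $\sum_{n>Y}\nu(n)n^{-\rho}=-1$; inserting a smooth cut-off and shifting a Mellin contour --- using \eqref{funceg} together with the convexity bound $L(\tfrac12+it)\ll (Q(|t|+3))^{O(1)}$ to discard the shifted integral and the tail of the sum beyond a suitable truncation point $X$ (a fixed power of $Q(|\gamma|+3)$ times $Y$; the pole at $s=1$ when $\piup'=\tilde\piup$ contributes something exponentially small away from bounded $|\gamma|$ and disturbs only $O(\log QT)$ zeros) --- one arrives at the clean, logarithm-free lower bound $\bigl|\sum_{Y<n\le X}\nu(n)\,w(n/X)\,n^{-\rho}\bigr|\ge\tfrac12$ for every such $\rho$, with $w$ a fixed smooth weight. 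The absence of any factor $\log(QT)$ here is the source of the log-free phenomenon.

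\emph{Mean value over the zeros, and the role of the hypotheses.} Squaring and summing gives $N(\sigma,T)\ll\sum_\rho|D(\rho)|^2$ with $D(s)=\sum_{Y<n\le X}\nu(n)w(n/X)n^{-s}$. I would then estimate $\sum_\rho|D(\rho)|^2$ by a mean-value inequality for Dirichlet polynomials evaluated at the ordinates of zeros (of large-sieve / Halász--Montgomery type, applied after passing to a set of well-spaced ordinates and with the bookkeeping arranged so that no power of $\log(QT)$ is shed), a bound governed by $\sum_{Y<n\le X}|\nu(n)|^2 n^{-2\sigma}$ and the parameters $X,T$. The crucial input is an \emph{unconditional, logarithm-free} estimate $\sum_{n\le Z}|\nu(n)|^2\ll Z$: Möbius bookkeeping reduces it to $\sum_{n\le Z}|\lambda(n)|^2\ll Z$, and writing the multiplicative coefficients through the power sums $a_{\piup\times\piup'}(p^\ell)=a_{\piup}(p^\ell)a_{\piup'}(p^\ell)$ via Newton's identities, together with the elementary inequality $|a_{\piup}(p^\ell)a_{\piup'}(p^\ell)|\le\tfrac12\bigl(|a_{\piup}(p^\ell)|^2+|a_{\piup'}(p^\ell)|^2\bigr)$, reduces this in turn to Hypothesis~\ref{HypothesisA} for $\piup$ and for $\piup'$; it is precisely the short-interval shape of that hypothesis (summation over $X<n\le X+Y$ with $Y\ge X^{1-\epsilon_\piup}$) that allows the mean-value inequality to be applied dyadic block by dyadic block without accumulating logarithms. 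The restriction ``$m,m'\le 2$ or one of $\piup,\piup'$ self-dual'' enters here: under it the analytic facts invoked about $L(s)$ --- its functional equation, its polynomial growth in the critical strip, and the boundedness/reality needed to run the bookkeeping --- are all available unconditionally, whereas for general $\mathrm{GL}_m\times\mathrm{GL}_{m'}$ they are not.

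\emph{Conclusion and main obstacle.} Feeding $\sum_{n\le Z}|\nu(n)|^2\ll Z$ into the mean-value inequality, followed by a bootstrap to remove the dependence on the number of zeros and a balancing of the parameters $X,Y,\delta_0$, should yield $N(\sigma,T)\ll(QT)^{c'(1-\sigma)}\le T^{c(1-\sigma)}$ uniformly for $1-\delta_0<\sigma\le 1$ and $T\ge T_0$; together with the reduction step this covers $0\le\sigma\le 1$. I expect the heart of the difficulty to be the mean-value step: the mollifier length, the truncation point and the mean-value inequality must be chosen so that near $\sigma=1$ --- where $T^{c(1-\sigma)}$ is of bounded size and hence not even one stray $\log(QT)$ can be tolerated --- the bound is genuinely logarithm-free. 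This is exactly the phenomenon that Linnik and Fogels achieved for Dirichlet $L$-functions, and the short-interval formulation of Hypothesis~\ref{HypothesisA} is precisely what makes it available in the present setting; by comparison, the case restriction on $m,m'$ is a secondary but unavoidable constraint dictated by the current state of knowledge on general Rankin--Selberg $L$-functions.
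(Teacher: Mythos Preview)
Your outline follows a genuinely different road from the paper: you propose a Montgomery--Gallagher mollifier zero-detector and a Hal\'asz--Montgomery mean value, whereas the paper runs Fogels' method via Tur\'an's power-sum lemma applied to the logarithmic derivative $-L'/L$. That distinction matters, because the two approaches feed on different arithmetic inputs, and your chosen input does not match what Hypothesis~\ref{HypothesisA} actually provides.

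The decisive gap is your claim that Hypothesis~\ref{HypothesisA} yields $\sum_{n\le Z}|\lambda(n)|^2\ll Z$ (and hence $\sum_{n\le Z}|\nu(n)|^2\ll Z$). Hypothesis~\ref{HypothesisA} controls $\sum\Lambda(n)|a_\pi(n)|^2$ in short intervals, i.e.\ the coefficients of $-L'/L(s,\pi\times\tilde\pi)$ on prime powers. The Dirichlet coefficients $\lambda(n)$ of $L(s,\pi\times\pi')$ are complete homogeneous symmetric polynomials in the local parameters, not power sums; Newton's identities express $\lambda(p^\ell)$ as a polynomial of degree $\ell$ in the $a_{\pi\times\pi'}(p^j)$, so bounding $|\lambda(p^\ell)|^2$ requires control of fourth and higher moments of the $a$'s, which Hypothesis~\ref{HypothesisA} (an $L^2$ statement) simply does not give. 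Moreover, $\sum_{n\le Z}|\lambda_{\pi\times\pi'}(n)|^2\ll Z$ would follow from the pole of $L(s,(\pi\boxtimes\pi')\times\widetilde{\pi\boxtimes\pi'})$, but the automorphy of $\pi\boxtimes\pi'$ on ${\rm GL}_{mm'}$ is not known in general, so this estimate is itself open. The paper sidesteps this completely: by working with $-G'/G$ it only ever needs $\sum\Lambda(n)|a_\pi(n)a_{\pi'}(n)|$ over short intervals, which Cauchy--Schwarz reduces directly to Hypothesis~\ref{HypothesisA} for $\pi$ and $\pi'$ separately.

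You also misplace the role of the structural assumption ``$m,m'\le 2$ or one of $\pi,\pi'$ self-dual''. The functional equation, entirety (away from $s=0,1$), and convexity bound for $L(s,\pi\times\pi')$ are known unconditionally for arbitrary cuspidal $\pi,\pi'$; they are not where the restriction bites. In the paper the assumption enters solely through Lemma~\ref{zerofree} (the standard zero-free region), which is used to guarantee $N_G(1-c_0/\log T,\,T)\le 1$ and thereby launch Fogels' lemma in the range $\lambda\in[c_0,c_1\log T]$. Your sketch never invokes a zero-free region, yet a log-free bound $N(\sigma,T)\le T^{c(1-\sigma)}$ forces $N(\sigma,T)$ to be bounded as $\sigma\to1^-$, so some such input is unavoidable; without it the ``bootstrap to remove the dependence on the number of zeros'' will not close.
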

Theorem \ref{thm3} generalises Fogels'  log-free zero-density theorem \cite{Fogels} for Dirichlet $L$-functions.  A log-free zero-density estimate for Dirichlet $L$-functions (in a range different from Fogels') was first  developed by Linnik in the proof of his celebrated theorem on the least prime in an arithmetic progression and strengthened later by Selberg and others.  See \cite{J} for later developments in log-free estimates in the classical setting. There are also analogues of Theorem \ref{thm3} for automorphic $L$-functions in the level aspect due to Kowalski and Michel \cite{KM}.

\begin{remarks}
(i) In \cite[Theorem 2.3]{LY2} it is proved that for a self-dual $\piup$ there exists a constant $c>0$ such that
\begin{equation*}
\label{PNTLY}
\sum_{n\leq X} \Lambda(n) |a_\piup(n)|^2 =X+O\left(X\exp(-c\sqrt{\log{X}})\right).
\end{equation*}
This implies the existence of a positive constant $c^\prime$ such that 
\begin{equation}\label{Lep}
\sum_{X<n\leq X+Y} \Lambda(n) |a_\piup(n)|^2 \sim Y,
\end{equation}
for $Y\geq X\exp(-c^\prime\sqrt{\log{X}})$ as $X\rightarrow \infty$. Note that this does not imply Hypothesis \ref{HypothesisA}. It does not appear that one can remove the condition in Theorem \ref{thm3} by employing (\ref{Lep}) over the longer range $Y\geq X\exp(-c^\prime\sqrt{\log{X}})$.

(ii) The self-dual condition in Theorem \ref{thm3} is necessary to ensure the existence of a standard zero-free region (see Lemma \ref{zerofree}). 
\end{remarks}
We describe some applications of Theorem \ref{thm3} to the distribution of the values ${a_\piup(p)=\sum_{j=1}^{m} \alpha_j(p)}$ in short intervals. These applications resemble the prime number theorem over short intervals which was proved by Hoheisel in 1930. Hoheisel showed that there exists a constant $\theta\leq \frac{32999}{33000}$ for which 
$$\pi(X+X^\theta)-\pi(X) \sim \frac{X^\theta}{\log{X}},$$
as $X\rightarrow \infty$, where $\pi(X)$ denotes the prime counting function. {The bound on $\theta$ has been improved by several authors; the best bound, due to Huxley \cite{Hux}, namely $\theta \leq {7}/{12}+\epsilon$.}
Hoheisel's proof was accomplished by employing the explicit formula for the Riemann zeta-function $\zeta(s)$, a \emph{non-standard zero-free region} in the form  $\sigma \geq 1-A{\log\log t}/{\log t}$, first found by Littlewood (see \cite[Theorem 5.17]{Titchmarsh}), 
and a zero-density estimate involving log powers (see \cite[Theorem 9.18]{Titchmarsh}).

For a general $L$-function $L(s)$ for which $\log{L(s)}$ has a Dirichlet series representation for $\Re(s)>1$,  we can define, analogous to the Chebyshev function $\psi(X)$, 
$$\psi_L(X)=\sum_{n\leq X} \Lambda_L(n),$$
where  $-L^\prime(s)/L(s)=\sum_{n=1}^{\infty} \Lambda_L(n)/n^s$ for $\Re(s)>1$. 
Moreover we say that $L(s)$ has a {\em standard zero-free region} if we can find positive constants $A$ and $t_0$ such that 
\begin{equation*}
\label{zerofree1}
L(s)\neq 0,~{\rm whenever}~\sigma>1-\frac{A}{\log{t}}~ {\rm and} ~t>t_0.
\end{equation*}
%

%

In \cite{Moreno} Moreno proved that for a Dirichlet series $L(s)$ with an appropriate explicit formula and a \emph{standard} zero-free region one could establish a lower bound for $\psi_L(X+Y)-\psi_L(X)$, for $Y\geq X^{1-\epsilon_L}$, provided that $L(s)$ satisfies a \emph{log-free} zero-density estimate. Inspired by Moreno's observation we established the log-free estimate of Theorem  \ref{thm3} which is applicable for a large class of automorphic $L$-functions. Thus we can apply Moreno's theorem to these $L$-functions. 
By employing properties of Rankin--Selberg $L$-functions and the Tauberian theorem of Wiener--Ikehara we can prove 
\begin{equation}
\label{PNT}
\psi_{\piup\times\tilde{\piup}}(X)=\sum_{n\leq X} \Lambda(n) a_{\piup\times\tilde{\piup}}(n) \sim X,
\end{equation}
as $X\rightarrow \infty$. This can be considered as the prime number theorem for $L(s, \piup\times\tilde{\piup})$. (Here $\piup$ is not necessarily self-dual.) As a consequence of our log-free zero-density estimate we prove the following short-interval version of \eqref{PNT}.

\begin{theorem}
\label{thm4}
Suppose that the self-dual automorphic representation $\piup$ satisfies Hypothesis \ref{HypothesisA}. Then there is a $\nu_\piup$ with $0<\nu_\piup<1$ such that for all $\theta>\nu_\pi$,
\begin{equation*}
\label{HypothesisA2}
\psi_{\piup\times\tilde{\piup}}(X+Y)-\psi_{\piup\times\tilde{\piup}}(X) \asymp_\piup Y,
\end{equation*}
whenever $Y\geq X^\theta$.
\end{theorem}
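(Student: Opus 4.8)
The plan is to implement Moreno's strategy \cite{Moreno}: combine a truncated explicit formula for $\psi_{\piup\times\tilde{\piup}}$ with a standard zero-free region and the log-free density bound of Theorem \ref{thm3}. First, since $\piup$ is self-dual and the local parameters of $\tilde{\piup}$ are the conjugates of those of $\piup$, Hypothesis \ref{HypothesisA} also holds for $\tilde{\piup}$ (because $|a_{\tilde{\piup}}(n)|=|a_\piup(n)|$), and the pair $(\piup,\tilde{\piup})$ meets the ``at least one is self-dual'' alternative; hence Theorem \ref{thm3} applies to $L(s,\piup\times\tilde{\piup})$ and furnishes constants $c,T_0>0$ depending only on $\piup$ with
\begin{equation*}
N_{\piup\times\tilde{\piup}}(\sigma,T)\le T^{c(1-\sigma)}\qquad(0\le\sigma\le1,\ T\ge T_0\ge\max\{3,Q_{\piup\times\tilde{\piup}}\}).
\end{equation*}
Moreover $L(s,\piup\times\tilde{\piup})$ has a simple pole at $s=1$ (the source of the main term in \eqref{PNT}), and its Dirichlet coefficients are non-negative since $a_{\piup\times\tilde{\piup}}(p^{\ell})=|a_\piup(p^{\ell})|^{2}\ge0$ away from the finitely many ramified primes; by Lemma \ref{zerofree} it therefore possesses a standard zero-free region $\sigma>1-A/\log(Q_{\piup\times\tilde{\piup}}(|t|+3))$ with no exceptional real zero.

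I would then use the standard truncated explicit formula (cf.\ \cite[\S5]{IK}): for $2\le T\le X$,
\begin{equation*}
\psi_{\piup\times\tilde{\piup}}(X)=X-\sum_{|\gamma|\le T}\frac{X^{\rho}}{\rho}+O\!\left(\frac{X\log^{2}(Q_{\piup\times\tilde{\piup}}X)}{T}\right),
\end{equation*}
the contribution of the trivial zeros and of the constant term being absorbed into the displayed error. Applying this at $X+Y$ and at $X$, subtracting, and using $\bigl|\tfrac{(X+Y)^{\rho}-X^{\rho}}{\rho}\bigr|=\bigl|\int_{X}^{X+Y}t^{\rho-1}\,dt\bigr|\le YX^{\beta-1}$ gives
\begin{equation*}
\psi_{\piup\times\tilde{\piup}}(X+Y)-\psi_{\piup\times\tilde{\piup}}(X)=Y+O\!\left(Y\sum_{|\gamma|\le T}X^{\beta-1}\right)+O\!\left(\frac{X\log^{2}(Q_{\piup\times\tilde{\piup}}X)}{T}\right).
\end{equation*}
Every zero with $|\gamma|\le T$ (and $T\ge Q_{\piup\times\tilde{\piup}}$) satisfies $\beta\le1-A'/\log T$ for a suitable $A'>0$, so partial summation over the zeros combined with the log-free bound yields
\begin{equation*}
\sum_{|\gamma|\le T}X^{\beta-1}\le\frac{N_{\piup\times\tilde{\piup}}(0,T)}{X}+(\log X)\int_{A'/\log T}^{1}\Bigl(\frac{T^{c}}{X}\Bigr)^{\!u}du.
\end{equation*}
Taking $T=X^{\delta}$ with a small fixed $\delta<1/c$ makes the first term $\ll X^{\delta-1}\log X=o(1)$ and the integral $\ll e^{-(1-c\delta)A'/\delta}/(1-c\delta)=:\eta(\delta)$, where $\eta(\delta)\to0$ as $\delta\to0^{+}$.

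To conclude, fix $\delta$ small enough that $\eta(\delta)<\tfrac12$ (and $\delta<\min\{1/c,\tfrac12\}$), and set $\nu_\piup:=1-\delta\in(0,1)$. For any $\theta>\nu_\pi$ and $X^{\theta}\le Y\le X$, the explicit-formula error is $\ll X^{1-\delta}\log^{2}X=o(X^{\theta})=o(Y)$ because $1-\delta<\theta$, while the zero-sum contributes at most $(\eta(\delta)+o(1))Y$; hence $\tfrac14Y\le\psi_{\piup\times\tilde{\piup}}(X+Y)-\psi_{\piup\times\tilde{\piup}}(X)\le2Y$ once $X$ is large in terms of $\piup$ and $\theta$, and the complementary range $Y>X$ is immediate from \eqref{PNT} applied at $X+Y\asymp Y$ and at $X=o(Y)$. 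The decisive point --- and the step I expect to be the main obstacle --- is the coupling carried out in the previous paragraph: a density estimate carrying a factor $T^{\epsilon}$ as in \eqref{KPdensity} would multiply the integrand by $X^{\delta\epsilon}$ and prevent $\eta(\delta)$ from being made small (whereas a factor $\log^{B}T$ would be harmless), so it is essential that Theorem \ref{thm3} is genuinely log-free; and self-duality of $\piup$ is indispensable both for invoking Theorem \ref{thm3} for $L(s,\piup\times\tilde{\piup})$ and, via Lemma \ref{zerofree}, for excluding a Siegel zero, without which the lower bound $\psi_{\piup\times\tilde{\piup}}(X+Y)-\psi_{\piup\times\tilde{\piup}}(X)\gg Y$ would fail.
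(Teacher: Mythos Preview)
Your overall strategy matches the paper's --- a truncated explicit formula, the standard zero-free region, the log-free density bound of Theorem \ref{thm3}, and the choice $T=X^{\delta}$; the estimation of $\sum_{|\gamma|\le T}X^{\beta-1}$ by partial summation and the selection of $\delta$ are essentially what the paper does. Two points, however, need attention.

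The main gap is the explicit formula. The remainder $O\bigl(X\log^{2}(Q_{\piup\times\tilde{\piup}}X)/T\bigr)$ you quote arises from the Perron step, and its usual derivation requires a pointwise bound of the shape $|\Lambda(n)a_{\piup\times\tilde{\piup}}(n)|\ll\Lambda(n)$ (or $\ll\Lambda(n)n^{\epsilon}$) on the coefficients near $X$. Without GRC no such bound is available for Rankin--Selberg coefficients, and the paper is explicit about this: it invokes the Liu--Ye form of Perron's formula (Lemma \ref{Liu-Ye}), in which the remainder is expressed as the short sum $\sum_{|n-X|<X/H}\Lambda(n)|a_{\piup\times\tilde{\piup}}(n)|$ plus $O(HX^{b}B(b)/T)$. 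Taking $H=T^{\epsilon_{\piup}}$ then lets Hypothesis \ref{HypothesisA} (together with \eqref{trivial} at the ramified primes) bound the short sum, producing the weaker but still adequate error $O(X\log X/T^{\epsilon_{\piup}})+O(X^{\mu_{0}})$ in \eqref{last}. Your proposal skips this, and the clean error you assert is in fact stronger than what the paper is able to justify.

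A secondary issue concerns the exceptional zero. Non-negativity of the Dirichlet coefficients does not by itself rule out a real zero close to $1$ (Dedekind zeta-functions already furnish counterexamples), and Lemma \ref{zerofree} permits an exceptional $\beta_{0}$ precisely when both factors are self-dual --- exactly the situation for $\piup\times\tilde{\piup}$ with $\piup$ self-dual. The paper does not exclude $\beta_{0}$; it carries the contribution $O(X^{\beta_{0}-1})$ through to \eqref{eformula} and uses only that $\beta_{0}<1$ is a fixed constant depending on $\piup$, so that this term is $o(1)$ as $X\to\infty$. Your argument can be repaired in the same way, but the justification you give is not correct.
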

The following result is a direct consequence of Theorem \ref{thm4} and GRC.

\begin{corollary}
\label{cor1.7}
Under the assumption of GRC for a self-dual $\piup$, there is a  $\nu_\piup$ with $0<\nu_\piup<1$ such that for all $\theta>\nu_\piup$,
\begin{equation*}
\sum_{X< n\leq X+Y} \Lambda(n) |a_\piup(n)|^2 \asymp_\piup Y,
\end{equation*}
and 
\begin{equation*}
\sum_{X< p\leq X+Y} (\log{p}) |a_\piup(p)|^2 \asymp_\piup Y,
\end{equation*}
whenever $Y\geq X^\theta$.
\end{corollary}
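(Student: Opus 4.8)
The plan is to deduce both estimates directly from Theorem~\ref{thm4}, which carries all of the analytic content; the rest is bookkeeping. First one notes that GRC for $\piup$ forces Hypothesis~\ref{HypothesisA}: as observed in the introduction, GRC gives $\sum_{X<n\leq X+Y}\Lambda(n)|a_\piup(n)|^2\leq m^2\bigl(\psi(X+Y)-\psi(X)\bigr)$, and by the Brun--Titchmarsh inequality and the prime number theorem the right-hand side is $\ll Y$ whenever $Y\geq X^\theta$ with $\theta>1/2$, so Hypothesis~\ref{HypothesisA} holds with any $\epsilon_\piup<1/2$. Since $\piup$ is self-dual, Theorem~\ref{thm4} therefore applies and yields some $\nu_\piup\in(0,1)$ such that $\psi_{\piup\times\tilde{\piup}}(X+Y)-\psi_{\piup\times\tilde{\piup}}(X)\asymp_\piup Y$ for all $\theta>\nu_\piup$ and $Y\geq X^\theta$; replacing $\nu_\piup$ by $\max\{\nu_\piup,1/2\}$ (still $<1$) if necessary, we may assume $\nu_\piup\geq 1/2$, so that every admissible $\theta$ satisfies $\theta>1/2$.

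Next one unwinds $\psi_{\piup\times\tilde{\piup}}$. Since $\Lambda$ is supported on prime powers, $\psi_{\piup\times\tilde{\piup}}(X)=\sum_{p^\ell\leq X}(\log p)\,a_{\piup\times\tilde{\piup}}(p^\ell)$, and for $p\nmid q_\piup$ (a fortiori $p\nmid(q_\piup,q_{\tilde{\piup}})$) the local factorisation of the Rankin--Selberg $L$-function recalled above together with $\alpha_{\tilde{\piup}}(j,p)=\overline{\alpha_\piup(j,p)}$ gives
\begin{equation*}
a_{\piup\times\tilde{\piup}}(p^\ell)=a_\piup(p^\ell)\,a_{\tilde{\piup}}(p^\ell)=a_\piup(p^\ell)\,\overline{a_\piup(p^\ell)}=|a_\piup(p^\ell)|^2 .
\end{equation*}
Hence
\begin{equation*}
\psi_{\piup\times\tilde{\piup}}(X+Y)-\psi_{\piup\times\tilde{\piup}}(X)=\sum_{X<n\leq X+Y}\Lambda(n)\,|a_\piup(n)|^2+R(X,Y),
\end{equation*}
where $R(X,Y)$ collects the prime powers $p^\ell\in(X,X+Y]$ with $p\mid q_\piup$. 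There are only $O_\piup(1)$ ramified primes; under GRC one has $|a_\piup(p^\ell)|=\bigl|\sum_{j=1}^m\alpha_\piup(j,p)^\ell\bigr|\leq m$ at every prime, and likewise $|\alpha_{\piup\times\tilde{\piup}}(j,k,p)|\leq 1$ at every prime (the local representation $\piup_p$ being tempered, so that $L(s,\piup_p\times\tilde{\piup}_p)$ has no poles in $\Re(s)>0$), whence $|a_{\piup\times\tilde{\piup}}(p^\ell)|\leq m^2$. Thus each term of $R(X,Y)$ is $\ll_\piup\log p$, and counting the at most $O_\piup(\log(X+Y))$ such terms gives $R(X,Y)\ll_\piup\log(X+Y)=o_\piup(Y)$ in our range. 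Theorem~\ref{thm4} now yields the first assertion:
\begin{equation*}
\sum_{X<n\leq X+Y}\Lambda(n)\,|a_\piup(n)|^2=\psi_{\piup\times\tilde{\piup}}(X+Y)-\psi_{\piup\times\tilde{\piup}}(X)+o_\piup(Y)\asymp_\piup Y .
\end{equation*}

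For the second assertion one separates the primes ($\ell=1$) from the proper prime powers ($\ell\geq 2$):
\begin{equation*}
\sum_{X<n\leq X+Y}\Lambda(n)\,|a_\piup(n)|^2=\sum_{X<p\leq X+Y}(\log p)\,|a_\piup(p)|^2+\sum_{\substack{X<p^\ell\leq X+Y\\ \ell\geq 2}}(\log p)\,|a_\piup(p^\ell)|^2 .
\end{equation*}
Using $|a_\piup(p^\ell)|\leq m$ again, the second sum is at most $m^2\sum_{p^\ell\leq X+Y,\ \ell\geq 2}\log p\ll m^2(X+Y)^{1/2}$, which is $o_\piup(Y)$ because $\theta>1/2$ (for $Y\leq X$ it is $\ll X^{1/2}=o(X^\theta)$, and for $Y>X$ it is $\ll Y^{1/2}$). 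Combining with the first assertion gives $\sum_{X<p\leq X+Y}(\log p)\,|a_\piup(p)|^2\asymp_\piup Y$, which completes the proof.

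There is no real obstacle here: the entire difficulty is already contained in Theorem~\ref{thm4}, and what remains is bookkeeping. The only points deserving care are the three verifications above --- that GRC implies Hypothesis~\ref{HypothesisA}; that the Rankin--Selberg local factorisation identifies the unramified coefficients of $\psi_{\piup\times\tilde{\piup}}$ with $|a_\piup(p^\ell)|^2$; and that the remaining contributions, from the $O_\piup(1)$ ramified primes and from proper prime powers, are $o(Y)$. For the last point one uses GRC both to bound $|a_\piup(p^\ell)|\leq m$ (making the $\ell\geq 2$ terms $\ll_\piup(X+Y)^{1/2}$) and to discard the ramified Euler factors, and one uses the normalisation $\nu_\piup\geq 1/2$ so that this $(X+Y)^{1/2}$ error is swamped by $Y$.
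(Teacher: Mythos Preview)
Your argument is correct and follows essentially the same route as the paper: invoke Theorem~\ref{thm4} (after observing that GRC implies Hypothesis~\ref{HypothesisA}), identify $a_{\piup\times\tilde{\piup}}(p^\ell)=|a_\piup(p^\ell)|^2$ at unramified primes, and discard the ramified-prime and higher-prime-power contributions as $o(Y)$. The only notable difference is in bookkeeping: the paper obtains the upper bound in the first assertion directly from GRC plus Brun--Titchmarsh rather than from Theorem~\ref{thm4}, and it controls the ramified-prime discrepancy via the unconditional bound~\eqref{trivial} (yielding an error $O\bigl(X^{1-2/(m^2+1)}(\log X)^2\bigr)$) rather than invoking temperedness under GRC as you do (yielding the sharper $O_\piup(\log(X+Y))$). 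Your treatment of the second assertion---simply subtracting the $\ell\geq 2$ terms from the first assertion after normalising $\nu_\piup\geq 1/2$---is a bit cleaner than the paper's, which instead substitutes back into the explicit formula~\eqref{eformula} and reruns the end of the proof of Theorem~\ref{thm4}; but the content is the same.
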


We next describe an application of Theorem \ref{thm4} in studying the non-lacunarity of the sequence $(a_\piup(p))$ where $p$ ranges over primes.  This problem has classical roots. Let $L(s, \Delta)=\sum_{n=1}^{\infty} \frac{\tau(n)}{n^s}$ be the Dirichlet series associated to the discriminant function $$\Delta(z)=e^{2\pi iz} \prod_{n=1}^{\infty} (1-e^{2\pi i nz})^{24}=\sum_{n=1}^{\infty} \tau(n) e^{2\pi i nz},$$
where $z$ is chosen in the upper half-plane. The coefficient $\tau(n)$ is the Ramanujan $\tau$-function. We know that $\Delta(z)$ is a cusp form of weight $12$ and level $1$, whence $L(s, \Delta)$ is an automorphic $L$-function for ${\rm GL}_2(\mathbb{A}_\mathbb{Q})$. More specifically $a_{\piup_\Delta}(p)=\tau(p)$, where $\piup_\Delta$ is the automorphic representation associated to $\Delta(z)$. A celebrated conjecture of Lehmer states that $\tau(n)\neq 0$ for all positive integers $n$. Serre \cite[Corollary 2, p.\ 174]{Serre2} has shown that for $x\geq 2$ and for any $\eta<1/2$, we have
\begin{equation*}\label{serr}
\#\{p\leq x: \tau(p) = 0\} \ll \frac{x}{\log^{1+ \eta} x}.
\end{equation*}
Since $\tau(n)$ is multiplicative, as a consequence of this result, Serre \cite[p.\ 179, Example 2]{Serre} proved that 
$\tau(n)$ is non-zero on a set of positive density. A sequence $(a(n))$ is said to be {\em non-lacunary} if the set $\{n;~a(n)\neq 0\}$ has positive density. More generally, Serre \cite[Theorem 16]{Serre}  proved  that the set of the  Fourier coefficients of a non-CM modular form of weight $k\geq 2$, level $N$, and character $\chi$, that is a normalised eigenform for the Hecke operators, is non-lacunary.

For the coefficients $b_f(n)$ of a modular form $f\neq 0$ of weight $k$, level $N$, and character $\chi$ one can also study the non-lacunarity of $(b_f(n))_{n\geq 1}$ 
by considering the function
\begin{equation*}
i_f(n):=\max\{i\geq 0;~b_f(n+k)=0~{\rm for~all}~0< k\leq i\}.
\end{equation*}
In \cite[p.\ 183]{Serre} Serre proposed this function and proved that if $f(z)$ is a cusp form of weight $k\geq 2$ that is not a linear combination of forms with complex multiplication, then 
\begin{equation}
\label{if}
i_f(n) \ll n.
\end{equation} 
By employing the asymptotic $$\sum_{n\leq X} |b_f(n)|^2 n^{1-k}=C_f X+O(X^{3/5}),$$ which is a classical result of Rankin and Selberg, one can improve \eqref{if} to
$i_f(n)\ll n^{3/5}.$
In \cite{BO}, Balog and Ono studied the non-vanishing of $b_f(n)$ over short intervals, and, consequently,  they proved 
$i_f(n) \ll_{f, \theta} n^\theta$ for $\theta>17/41$.
The exponent $17/41$ in this theorem was improved to $7/17$ in \cite[Corollary 1, p.\ 299]{KRW}.

We can also consider analogous problems for the sequence $(b_f(p))$ where $p$ ranges over primes. Here we study this problem in the context of automorphic $L$-functions. 

For a principal $L$-function $L(s, \piup)$, a prime variable $p$ and a real variable $x$ with $x> p$, let
$$J_{\piup} (p, x):=\#\{{\rm prime}~q;~a_{\piup}(q)=0~{\rm for~all~}p<q\leq x\}$$
and define
$$j_\piup (p)=\max_{x;~x> p} J_\piup (p,x)\}.$$

We are interested in finding a non-trivial upper bound for  $j_\piup(p)$.
%
Under the assumption of GRC for a self-dual $\piup$ we can deduce from \eqref{Lep} that
$$\#\{p;~X<p\leq X+Y~{\rm and}~a_\piup(p)\neq 0\} \gg \frac{Y}{\log{X}},$$
provided that $Y\geq X\exp(-c^\prime \sqrt{\log{x}})$ for a suitable $c^\prime>0$. This implies that
\begin{equation}
\label{PN}
j_\piup(p)\ll_\piup p \exp(-c^\prime \sqrt{\log{p}}).
\end{equation}
We prove the following refinement of \eqref{PN}.
\begin{theorem}
\label{thm1.11}
Under the assumption of GRC for a self-dual $\piup$, there is a $\nu_\piup$ with $0<\nu_\piup<1$ such that for $\theta>\nu_\piup$ and $Y\geq X^\theta$ we have
$$\#\{p;~X<p\leq X+Y~{\rm and}~a_\piup(p)\neq 0\} \gg_{\piup, \theta} \frac{Y}{\log{X}}.$$ 
Moreover, we have $$j_\piup(p)\ll_{\piup, \theta} p^\theta.$$
\end{theorem}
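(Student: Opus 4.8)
The plan is to deduce Theorem~\ref{thm1.11} from Corollary~\ref{cor1.7} together with the trivial upper bound on $|a_\piup(p)|$ supplied by GRC. First I would fix $\theta>\nu_\piup$ and take $Y\geq X^\theta$ with $X$ sufficiently large. Since GRC for the self-dual $\piup$ forces Hypothesis~\ref{HypothesisA} (as explained right after the statement of that hypothesis), Corollary~\ref{cor1.7} is available and gives the lower bound
\[
Y\ \ll_{\piup,\theta}\ \sum_{X<p\leq X+Y}(\log p)\,|a_\piup(p)|^2 .
\]
On the other hand, GRC asserts $|\alpha_\piup(j,p)|\leq 1$ for every prime $p$ and every $j$, so $|a_\piup(p)|=\bigl|\sum_{j=1}^{m}\alpha_\piup(j,p)\bigr|\leq m$. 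Hence only the primes $p$ with $a_\piup(p)\neq 0$ contribute to the sum, and each such $p$ contributes at most $m^{2}\log(X+Y)$. Comparing this with the displayed lower bound and dividing by $\log X$ yields
\[
\#\{p:\ X<p\leq X+Y,\ a_\piup(p)\neq 0\}\ \gg_{\piup,\theta}\ \frac{Y}{\log X},
\]
which is the first assertion of Theorem~\ref{thm1.11}, valid once $X\geq X_{0}=X_{0}(\piup,\theta)$.

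For the bound on $j_\piup(p)$ I would argue by contradiction from the count just established, keeping the same $\theta>\nu_\piup$. Note first that Corollary~\ref{cor1.7} (or the counting bound above, letting $X\to\infty$) shows there are infinitely many primes $q$ with $a_\piup(q)\neq 0$, so $j_\piup(p)<\infty$ for every prime $p$. Now let $p\geq X_{0}$ and suppose $x>p$ is such that $a_\piup(q)=0$ for all primes $q\in(p,x]$. If $x-p\geq p^{\theta}$, then applying the first assertion with $X=p$ and $Y=x-p$ would produce a prime $q\in(p,x]$ with $a_\piup(q)\neq 0$, a contradiction. Therefore $x-p<p^{\theta}$, so every prime counted by $J_\piup(p,x)$ lies in the interval $(p,\,p+p^{\theta}]$, which contains $O(p^{\theta})$ integers; hence $J_\piup(p,x)\ll p^{\theta}$, and taking the maximum over the admissible $x$ gives $j_\piup(p)\ll p^{\theta}$. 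For the finitely many primes $p<X_{0}$ the finiteness of $j_\piup(p)$ makes $\max_{p<X_{0}}j_\piup(p)$ an $O_{\piup,\theta}(1)$ constant, and combining the two ranges gives $j_\piup(p)\ll_{\piup,\theta}p^{\theta}$ for all $p$, with $\nu_\piup$ the constant inherited from Theorem~\ref{thm4}.

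I do not expect a genuine obstacle in this argument: once Theorem~\ref{thm4}, and hence Corollary~\ref{cor1.7}, is in hand, Theorem~\ref{thm1.11} is a short arithmetic deduction, and all of the analytic weight sits in the log-free zero-density estimate of Theorem~\ref{thm3}, which has already been proved. If one preferred to bypass Corollary~\ref{cor1.7} and work directly with the short-interval prime number theorem of Theorem~\ref{thm4}, the only extra care would be in discarding from $\psi_{\piup\times\tilde{\piup}}(X+Y)-\psi_{\piup\times\tilde{\piup}}(X)$ the contributions of the prime powers $p^{\ell}$ with $\ell\geq 2$ and of the ramified primes $p\mid q_\piup$; under GRC these terms are nonnegative and of total size $O_\piup(\sqrt{X}\log X)=o(Y)$, so they do not affect the lower bound $\gg_{\piup,\theta}Y$ for $\sum_{X<p\leq X+Y}(\log p)|a_\piup(p)|^{2}$, and the rest of the argument goes through verbatim.
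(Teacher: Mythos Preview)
Your proposal is correct and follows essentially the same route as the paper: invoke Corollary~\ref{cor1.7} to get $Y\ll_{\piup,\theta}\sum_{X<p\leq X+Y}(\log p)|a_\piup(p)|^2$, use the GRC bound $|a_\piup(p)|\leq m$ to dominate the right-hand side by $\log X$ times the count of primes with $a_\piup(p)\neq 0$, and then specialise $X=p$, $Y=p^\theta$ to bound $j_\piup(p)$. Your write-up is in fact slightly more careful than the paper's (you note the finiteness of $j_\piup(p)$ and treat the primes $p<X_0$ separately), but the substance is identical.
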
 
Observe that in the case in which $\piup$ is associated to a cusp newform $f(z)$ then $a_\piup(p)=b_f(p)$ (the $p$-th Fourier coefficient of $f(z)$). It is known that if $f(z)$ is of CM-type then the set of primes $p$ with $b_f(p)\neq 0$ has density $\delta=1/2$ (\cite[p.\ 253]{MMS}). If $f(z)$ is not of CM-type then a conjecture of Lang and Trotter (\cite[p.\ 253]{MMS})  predicts that $\delta=1$. In fact for non-CM forms of weight $\geq 4$, Atkin and Serre \cite[p.\ 244]{Serre2} conjectured that 
$$|b_f(p)|\gg_\epsilon p^{\frac{k-3}{2}-\epsilon}$$
for any $\epsilon>0$, and thus $b_f(p)\neq 0$ always.
Theorem \ref{thm1.11} provides information on the non-lacunarity of the sequence $(b_f(p))$. 
\begin{corollary}
Suppose that $f(z)=\sum_{n=1}^{\infty} b_f(n) e^{{2\pi inz}}$ is a cusp newform of integer weight $k\geq 2$, level $N$, and character $\chi$. Then there exists  $0<\nu_f<1$ such that for $\theta>\nu_f$ and $Y\geq X^\theta$ we have $$\#\{X<p\leq X+Y;~b_f(p)\neq 0\} \gg_{f, \theta} \frac{Y}{\log{X}}.$$
In particular
$$j_f(p) \ll _{f, \theta} p^{\theta}.$$
\end{corollary}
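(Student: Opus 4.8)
The plan is to deduce the Corollary from (the proof of) Theorem~\ref{thm1.11}. By the classical correspondence between newforms and cuspidal automorphic representations of ${\rm GL}_2(\mathbb{A}_\mathbb{Q})$, attach to $f$ a representation $\piup_f$ whose standard $L$-function is $L(s,f)$ in the arithmetic normalisation; after the analytic normalisation one has $a_{\piup_f}(p)=b_f(p)p^{-(k-1)/2}$ for $p\nmid N$, and at the finitely many $p\mid N$ one still has $a_{\piup_f}(p)=0$ precisely when $b_f(p)=0$. Hence the sets $\{p:b_f(p)\neq 0\}$ and $\{p:a_{\piup_f}(p)\neq 0\}$ agree up to finitely many primes, and it suffices to prove the stated lower bound with $a_{\piup_f}(p)$ in place of $b_f(p)$ and to observe $j_f(p)=j_{\piup_f}(p)$. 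Now $\piup_f$ satisfies GRC by Deligne's theorem (the Ramanujan--Petersson bound for holomorphic newforms), so $|a_{\piup_f}(p)|\leq 2$; and, as noted after Hypothesis~\ref{HypothesisA}, GRC implies Hypothesis~\ref{HypothesisA} for $\piup_f$ with any $0<\epsilon_{\piup_f}<1/2$, and equally for $\tilde{\piup}_f$ since $|a_{\tilde{\piup}_f}(n)|=|a_{\piup_f}(n)|$.

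If the nebentypus $\chi$ is real then $\piup_f$ is self-dual and Theorem~\ref{thm1.11} applies directly, with $\nu_f:=\nu_{\piup_f}$. For a general $\chi$ the representation $\piup_f$ need not be self-dual, but the proofs of Theorems~\ref{thm4} and~\ref{thm1.11} carry over verbatim with $\piup=\piup_f$, because the only place self-duality is used there is to license the input estimates for $L(s,\piup_f\times\tilde{\piup}_f)$, and for $\piup_f$ of degree $2$ these are available unconditionally: Theorem~\ref{thm3} applies to $N_{\piup_f\times\tilde{\piup}_f}(\sigma,T)$ through its ``$m,m'\leq 2$'' clause (here both factors have degree $2$) rather than through self-duality; the prime number theorem \eqref{PNT} for $L(s,\piup_f\times\tilde{\piup}_f)$ holds for every automorphic $\piup_f$; and a standard zero-free region for $L(s,\piup_f\times\tilde{\piup}_f)$ exists for every ${\rm GL}_2$ newform, since $-\frac{L'}{L}(s,\piup_f\times\tilde{\piup}_f)$ has non-negative Dirichlet coefficients $\Lambda(n)|a_{\piup_f}(n)|^2$ and $L(s,\piup_f\times\tilde{\piup}_f)$ has a simple pole at $s=1$ (equivalently, $L(s,\piup_f\times\tilde{\piup}_f)=\zeta(s)L(s,{\rm ad}\,\piup_f)$ with ${\rm ad}\,\piup_f$ self-dual, so Lemma~\ref{zerofree} applies to each isobaric constituent). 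Feeding these into the argument of Theorem~\ref{thm4} yields, as in Corollary~\ref{cor1.7},
\[
\sum_{X<p\leq X+Y}(\log p)\,|a_{\piup_f}(p)|^2 \asymp_f Y \qquad (Y\geq X^\theta,\ \theta>\nu_f).
\]

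To extract the count, note the left side only sees primes with $a_{\piup_f}(p)\neq 0$, and each such term is $\leq 4\log(X+Y)$ by $|a_{\piup_f}(p)|\leq 2$; hence
\[
\#\{X<p\leq X+Y:\ a_{\piup_f}(p)\neq 0\} \geq \frac{1}{4\log(X+Y)}\sum_{X<p\leq X+Y}(\log p)\,|a_{\piup_f}(p)|^2 \gg_{f,\theta}\frac{Y}{\log X},
\]
which gives the displayed inequality of the Corollary. Finally, if $J_f(p,x)>0$ for some $x>p$ then $b_f(q)=0$ for all primes $q\in(p,x]$, so the inequality just proved (applied with $X=p$, $Y=x-p$) forces $x-p<p^\theta$ once $p$ is large; therefore $J_f(p,x)\leq\pi(p+p^\theta)-\pi(p)\ll p^\theta$ for all $x$, and $j_f(p)\ll_{f,\theta}p^\theta$ (the finitely many small $p$ being absorbed into the implied constant). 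The only point requiring genuine care is the second paragraph: one must check that self-duality enters Theorems~\ref{thm3}--\ref{thm1.11} only via properties of $L(s,\piup_f\times\tilde{\piup}_f)$ that hold unconditionally when $\deg\piup_f=2$, so that the complex-nebentypus case is not excluded.
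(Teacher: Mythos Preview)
Your proof is correct and follows the paper's route—deduce the Corollary from Theorem~\ref{thm1.11} via Deligne's proof of GRC for holomorphic newforms—but you go further than the paper, whose entire argument is the single sentence ``the results for newforms\ldots\ follow from Theorem~\ref{thm1.11} since these $L$-functions satisfy GRC.'' You correctly flag that a newform with complex nebentypus $\chi$ yields a non-self-dual $\piup_f$, so Theorem~\ref{thm1.11} does not literally apply, and you trace through the proofs of Theorems~\ref{thm4} and~\ref{thm1.11} to check that self-duality is used only to invoke Theorem~\ref{thm3} and Lemma~\ref{zerofree}, each of which has an independent clause covering the case $m=m'=2$. That verification is sound; indeed you can streamline it by observing that Lemma~\ref{zerofree} already gives the zero-free region for $m=m'=2$ directly, so the adjoint factorisation $L(s,\piup_f\times\tilde\piup_f)=\zeta(s)L(s,\mathrm{ad}\,\piup_f)$ is not needed (and the absence of an exceptional zero when $\piup_f$ is not self-dual, also recorded in Lemma~\ref{zerofree}, only makes the argument easier). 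Your derivation of $j_f(p)\ll p^\theta$ is correct as well, matching the paper's brief argument in the proof of Theorem~\ref{thm1.11}.
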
 
\noindent More specifically this corollary has the following consequence related to Lehmer's conjecture.
There is a constant $\theta$ with $0<\theta<1$ such that
\begin{equation}\label{serr2}
\#\{X<p\leq X+X^\theta;~\tau(p)\neq 0\} \gg_{\theta} \frac{X^\theta}{\log{X}}.
\end{equation}
In \cite{Moreno}, Moreno proved (\ref{serr2}) under the assumptions that $L(s, \piup_\Delta\times\piup_\Delta)$ satisfies a suitable explicit formula, a certain zero-free region, a specific upper bound for the number of its zeroes in a box, and a log-free zero-density estimate. Moreno calls the collection of these four assumptions the Hoheisel property of $L(s, \piup_\Delta\times\piup_\Delta)$.

The structure of the paper is as follows. In Section 2 we prove several preparatory lemmas. The proof of our main log-free density result (Theorem \ref{thm3}) follows closely the proof of the main theorem of \cite{Fogels}, which itself is based on Tur\'{a}n's power-sum method. In Section 3 we describe this method as stated by Fogels in his pole detection lemma (Lemma \ref{Fogels pole}). Section 4 is dedicated to a detailed proof of Theorem \ref{thm3}. Hypothesis \ref{HypothesisA} plays a crucial role in applying the  power-sum method in the proof of Theorem \ref{thm3}. 
In the final section we prove Theorems \ref{thm4} and \ref{thm1.11} and their corollaries. The proof of Theorem \ref{thm4} is an adaptation of the proof of Hoheisel's theorem to the case of automorphic $L$-functions. One important new feature is an application of a recent version of Perron's formula proved by Liu and Ye \cite{LY2}, which admits the use of Hypothesis \ref{HypothesisA}, which is weaker than GRC,  instead of GRC itself.

\begin{notation}
For two functions $f(x)$ and $g(x)\neq 0$, we use the notation $f(x)=O(g(x))$ for $x\in X$, or alternatively $f(x)\ll g(x)$ for $x\in X$ (where the set $X$ is specified either explicitly or implicitly), if $|f(x)/g(x)|$ is  bounded on $X$. We use the notation $f(x)\asymp g(x)$ if $f(x)\ll g(x)$ and $g(x)\ll f(x)$. Sometimes we write $f(x)\ll_t g(x)$ or $f(x)\asymp_t  g(x)$ when the implicit constants depend on the parameter $t$.
\end{notation}

\section{Preliminaries}
It is known that $s(1-s)\Phi(s, \piup\times \piup^\prime)=s(1-s)L(s,\piup_\infty\times \piup_\infty^\prime)L(s,\piup\times \piup^\prime)$ is an entire function of order one and moreover it has infinitely many zeroes. Thus, by the Hadamard factorization theorem, it may be written as an infinite product over its zeroes.  We denote a zero of $L(s, \piup\times\piup^\prime)$ by $\rho_{\piup\times \piup^\prime}$. We call $\rho_{\piup\times \piup^\prime}$ a \emph{trivial} zero if 
 $0\neq \rho_{\piup\times \piup^\prime}=-2n-\mu_{\piup\times\piup^\prime}(j, k)$ for some non-negative integer $n$, $1\leq j \leq m$, and $1\leq k \leq m^\prime$.
The functional equation \eqref{funceg} shows that all other zeroes are located in the critical strip $0\leq \Re(s) \leq 1$. We know that 
\begin{equation}
\label{trivial}
\log_p|\alpha_{\piup\times\tilde{\piup}}(j, k,p)|~{\rm and}~ |\Re(\mu_{\piup\times\piup^\prime}(j, k))|\leq 1-\frac{1}{m^2+1}-\frac{1}{(m^\prime)^2+1},
\end{equation}
(see \cite[formulas (4) and (6)]{B}).
This shows that the number of the trivial zeroes in the critical strip is finite. 
In the rest of the paper we use $\rho$ for a zero (trivial or non-trivial) of $L(s,\piup\times \piup^\prime)$. We denote the collection of zeroes (including multiplicity) of $L(s,\piup\times \piup^\prime)$ by $Z_{\piup\times\piup^\prime}$. We also note that by employing the bound \eqref{trivial} for $\alpha_{\piup\times\tilde{\piup}}(j, k, p)$ we can deduce, under the assumption of Hypothesis \ref{HypothesisA}, that  there exists ${\epsilon_\piup}^\prime$ such that  
\begin{equation*}
\sum_{X\leq n\leq X+Y} \Lambda(n) |a_{\piup\times\tilde{\piup}}(n)| \ll_\piup Y
\end{equation*}
whenever $Y\geq X^{1-{{\epsilon_\piup}^\prime}}$. Note that  
$a_{\piup\times\tilde{\piup}}(n)=|a_{\piup}(n)|^2$ except for finitely many primes.

In the following two lemmas we summarise some basic properties of $L(s, \piup\times\piup^\prime)$ and its zeroes.

\begin{Lem}
\label{LYlemma}
Let
\begin{equation*}
N_{\piup\times\piup^\prime}(T) = \# \{\rho=\beta+i\gamma; \, L_{\piup\times\piup^\prime}(\rho)=0~{\rm and}~|\gamma| \leq T\}
\end{equation*}
denote the zero-counting function of $L(s, \piup\times\piup^\prime)$.

\noindent
(a) Let $T>2$. Then $$N_{\piup\times \piup^\prime}(T+1)-N_{\piup\times \piup^\prime}(T) \ll \log{(Q_{\piup \times \piup^\prime} T)}$$ and
$$N_{\piup\times \piup^\prime}(T) \ll T\log{(Q_{\piup \times \piup^\prime} T)}.$$
(b) Let $s=\sigma+it$ with $-1/(m^2+1)-1/((m^\prime)^2+1)\leq \sigma \leq 2$. If  $s\not\in Z_{\piup\times\piup^\prime}$ and also $s\neq 0, 1$ if $\piup^\prime=\tilde{\piup}$, then
\begin{equation}\label{372}
\frac{L'}{L}(s, \piup\times\piup^\prime) +\frac{r_0}{s}+\frac{r_0}{s-1}- \sum_{\substack{{\rho}\\{|s-\rho|<1}}} \frac{1}{s-\rho} \ll  \log Q_{\piup\times\piup^\prime}( |t|+2),
\end{equation}
where $r_0=0$ if $\piup^\prime\neq\tilde{\piup}$ and $r_0=1$ if $\piup^\prime=\tilde{\piup}$. The terms in the sum in (\ref{372}) are repeated according to the multiplicity of $\rho$.
\end{Lem}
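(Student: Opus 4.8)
The plan is to run the classical Hadamard-factorization argument (as in \cite[\S5.6--5.7]{IK}), adapted to the Rankin--Selberg completed function; part (a) will come out of the density of zeros near a horizontal line, and part (b) from comparing the logarithmic-derivative identity at $s$ with the same identity at $s_0=2+it$. (This is close to estimates of Liu and Ye.) Since $[s(s-1)]^{r_0}\Phi(s,\piup\times\piup')$ is entire of order one, the Hadamard factorization theorem produces a constant $B=B_{\piup\times\piup'}$ with
\begin{equation*}
\frac{L'}{L}(s,\piup\times\piup') + \frac{r_0}{s} + \frac{r_0}{s-1} + \frac{L_\infty'}{L_\infty}(s,\piup_\infty\times\piup_\infty') = B + \sum_{\rho}\Bigl(\frac{1}{s-\rho} + \frac{1}{\rho}\Bigr),
\end{equation*}
the sum running over the nontrivial zeros $\rho=\beta+i\gamma$ and converging absolutely once grouped as displayed. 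At $s_0=2+it$ I would record three uniform estimates for the left side: (i) $\tfrac{L'}{L}(s_0,\piup\times\piup')\ll_{m,m'}1$, since the defining Dirichlet series converges absolutely at $\Re s=2$ with coefficients at $p\nmid q_{\piup\times\piup'}$ bounded by $\tfrac12(|a_\piup(p^\ell)|^2+|a_{\piup'}(p^\ell)|^2)$ (Cauchy--Schwarz) and then by $p^{\ell\vartheta}$ with $\vartheta<1$ using \eqref{trivial} on the local parameters of $\piup\times\tilde\piup$ and $\piup'\times\tilde\piup'$, the finitely many ramified primes contributing $O(1)$; (ii) $\tfrac{L_\infty'}{L_\infty}(s_0,\piup_\infty\times\piup_\infty')\ll\log(Q_{\piup\times\piup'}(|t|+2))$ by Stirling's formula, the bound \eqref{trivial} on $\Re\mu_{\piup\times\piup'}(j,k)$ keeping the arguments of the $\Gamma$-factors a fixed distance from their poles; (iii) $\tfrac{r_0}{s_0}+\tfrac{r_0}{s_0-1}\ll 1$.

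For the density near a line, take real parts of the identity at $s_0=2+it$ and note $\Re\tfrac{1}{s_0-\rho}=\tfrac{2-\beta}{|s_0-\rho|^2}\ge\tfrac14(1+(t-\gamma)^2)^{-1}$ and $\Re\tfrac1\rho\ge 0$ (as $0\le\beta\le1$), whence
\begin{equation*}
\sum_{\rho}\frac{1}{1+(t-\gamma)^2}\ll\log\bigl(Q_{\piup\times\piup'}(|t|+2)\bigr)-\Re B.
\end{equation*}
It remains to bound $\Re B$ from below: the functional equation \eqref{funceg} shows the zero set of $L(s,\piup\times\piup')$ is invariant under $\rho\mapsto 1-\bar\rho$ (the zeros of $L(s,\tilde\piup\times\tilde\piup')$ being the complex conjugates of those of $L(s,\piup\times\piup')$), and the standard consequence is $\Re B+\sum_\rho\Re\tfrac1\rho\ll\log Q_{\piup\times\piup'}$, giving $\sum_\rho(1+(t-\gamma)^2)^{-1}\ll\log(Q_{\piup\times\piup'}(|t|+2))$. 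Each zero with $|\gamma-t|\le1$ contributes at least $\tfrac12$, so applying this with $t=\pm(T+\tfrac12)$ yields $N_{\piup\times\piup'}(T+1)-N_{\piup\times\piup'}(T)\ll\log(Q_{\piup\times\piup'}T)$; summing over $T=0,1,2,\dots$ gives $N_{\piup\times\piup'}(T)\ll T\log(Q_{\piup\times\piup'}T)$, which is part (a).

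For part (b), subtract the identity at $s_0=2+it$ from the identity at $s$ (with $-1/(m^2+1)-1/((m')^2+1)\le\sigma\le 2$, $s\notin Z_{\piup\times\piup'}$, and $s\ne0,1$ if $\piup'=\tilde\piup$). The constant $B$ cancels; the $\Gamma$-terms are $\ll\log(Q_{\piup\times\piup'}(|t|+2))$ by Stirling and the $r_0$-terms are $O(1)$, while the zero-sum difference reorganizes into $\sum_{|s-\rho|\ge1}\bigl(\tfrac{1}{s-\rho}-\tfrac{1}{s_0-\rho}\bigr)-\sum_{|s-\rho|<1}\tfrac{1}{s_0-\rho}-\sum_{|s-\rho|<1}\tfrac{1}{s-\rho}$, the last sum being exactly the term subtracted off in \eqref{372}. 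Using $\bigl|\tfrac{1}{s-\rho}-\tfrac{1}{s_0-\rho}\bigr|=\tfrac{|s_0-s|}{|s-\rho||s_0-\rho|}\ll(1+(t-\gamma)^2)^{-1}$ for $|\gamma-t|$ large, together with part (a) to handle the $O(\log(Q_{\piup\times\piup'}(|t|+2)))$ zeros with $|\gamma-t|$ bounded (each contributing $O(1)$), yields \eqref{372}.

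The only genuinely delicate point is the uniformity in the analytic conductor of the Stirling estimates for $\tfrac{L_\infty'}{L_\infty}$: one must verify that for $\sigma$ in the stated range the arguments $\tfrac12(s+\mu_{\piup\times\piup'}(j,k))$ of the $\Gamma$-factors avoid a fixed neighborhood of $\{0,-1,-2,\dots\}$, which is precisely where \eqref{trivial} enters; the remaining estimates, including the two-sided control of $\Re B$ via the functional equation, are routine.
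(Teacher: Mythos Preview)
Your proposal is the standard Hadamard-factorization argument from \cite[\S5.7]{IK}, which is precisely what the paper's proof cites; the overall approach is correct and matches the paper's. One correction to your closing remark: the bound \eqref{trivial} only yields $\Re\tfrac12(s+\mu_{\piup\times\piup'}(j,k))\ge -\tfrac12$ for $\sigma$ in the stated range, so the $\Gamma$-arguments stay a fixed distance from $-1,-2,\dots$ but \emph{not} from $0$ --- a trivial zero $-\mu_{\piup\times\piup'}(j,k)$ can lie inside the strip, and near it $\tfrac{L_\infty'}{L_\infty}(s)$ blows up. The remedy is already built into the statement: the sum in \eqref{372} runs over \emph{all} zeros of $L(s,\piup\times\piup')$ (trivial as well as nontrivial, per the paper's convention on $\rho$), so the pole $-\tfrac{1}{s-\rho_{\rm triv}}$ contributed by each nearby $\Gamma$-factor is exactly cancelled by a term of the subtracted sum, after which Stirling controls the regular part.
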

\begin{proof}
The proofs are standard, and analogous to the classical proofs for Dirichlet $L$-functions. See \cite[Theorem 5.8]{IK} and \cite[Theorem 5.7 (b)]{IK} for details. 
\end{proof}

\begin{Lem}
\label{LYlemma2}
(a) For $1<\sigma\leq 2$ we have $$\sum_{n=1}^{\infty} \frac{\Lambda(n) |a_\piup(n)|^2}{n^\sigma} \ll \frac{1}{\sigma-1}.$$
(b) For $X>1$ we have
$$\sum_{n\leq X} \frac{\Lambda(n) |a_\piup(n)|^2}{n} \ll \log{X}.$$
\end{Lem}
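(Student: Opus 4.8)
The plan is to prove both parts by reducing the sum over $\Lambda(n)|a_\piup(n)|^2$ to a sum over prime powers, noting that $|a_\piup(n)|^2 = a_{\piup\times\tilde\piup}(n)$ up to finitely many primes, and then comparing with the logarithmic derivative of the Rankin--Selberg $L$-function $L(s,\piup\times\tilde\piup)$. First, for part (a), I would write $\sum_{n\geq 1}\Lambda(n)|a_\piup(n)|^2 n^{-\sigma}$ in terms of $-\frac{L'}{L}(\sigma,\piup\times\tilde\piup)$; since the coefficients $\Lambda(n)a_{\piup\times\tilde\piup}(n)$ differ from $\Lambda(n)|a_\piup(n)|^2$ only at the finitely many ramified primes (and those contribute an absolutely convergent, hence $O(1)$, tail for $\sigma>1$), it suffices to bound $-\frac{L'}{L}(\sigma,\piup\times\tilde\piup)$. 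Near $\sigma=1^+$ this has a simple pole coming from the simple pole of $\Phi(s,\piup\times\tilde\piup)$ at $s=1$, contributing a term $\ll \frac{1}{\sigma-1}$; away from $\sigma=1$ (say $\sigma\geq 3/2$) the Dirichlet series converges absolutely and is $O(1)$. Alternatively, and perhaps more cleanly, one can argue positivity: the coefficients $\Lambda(n)a_{\piup\times\tilde\piup}(n)$ are nonnegative (a standard fact for Rankin--Selberg $L$-functions of the form $\piup\times\tilde\piup$), so one may simply majorise using $-\frac{L'}{L}(\sigma,\piup\times\tilde\piup) \leq -\frac{\zeta'}{\zeta}(\sigma)\cdot C \ll \frac{1}{\sigma-1}$ after accounting for the degree $m^2$ and the finitely many bad Euler factors.

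For part (b), I would deduce the bound $\sum_{n\leq X}\Lambda(n)|a_\piup(n)|^2 n^{-1}\ll\log X$ from part (a) by a standard Rankin-trick/partial-summation argument. Setting $\sigma = 1 + 1/\log X$ in part (a), every term with $n\leq X$ satisfies $n^{-\sigma} = n^{-1} n^{-1/\log X} \geq n^{-1} e^{-1}$, so
$$
\sum_{n\leq X}\frac{\Lambda(n)|a_\piup(n)|^2}{n} \leq e\sum_{n\leq X}\frac{\Lambda(n)|a_\piup(n)|^2}{n^\sigma} \leq e\sum_{n=1}^{\infty}\frac{\Lambda(n)|a_\piup(n)|^2}{n^\sigma} \ll \frac{1}{\sigma-1} = \log X,
$$
using that all terms are nonnegative. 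This handles $X$ bounded away from $1$; for $X$ close to $1$ the left side is empty or $O(1)$ and the claim is trivial (one may restrict attention to $X\geq 2$, adjusting the implied constant).

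The main obstacle is really a bookkeeping one rather than a genuine difficulty: one must be careful that the identity $|a_\piup(n)|^2 = a_{\piup\times\tilde\piup}(n)$ only holds for $n$ supported on unramified prime powers, and that $a_\piup(n)$ (hence $a_{\piup\times\tilde\piup}(n)$) is defined to be $0$ unless $n$ is a prime power (in fact the paper sets $a_\piup(n)=0$ when $n$ is not a perfect square, but with $\Lambda(n)$ present only prime powers $p^\ell$ survive, and the relevant quantities are $|a_\piup(p^\ell)|^2$). The ramified primes contribute only finitely many Euler factors, each an entire nonvanishing function near $\sigma=1$, so their logarithmic derivatives are bounded there; thus they affect neither the pole structure nor the final bounds, only the implied constants. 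One should also note that the nonnegativity of $\Lambda(n)a_{\piup\times\tilde\piup}(n)$ — which is what makes the Rankin trick in part (b) legitimate — is a well-known property of $\pi\times\tilde\pi$ Rankin--Selberg $L$-functions (its logarithmic derivative has nonnegative Dirichlet coefficients), and can be cited from the references on automorphic $L$-functions already invoked in the paper.
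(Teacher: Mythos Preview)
Your primary argument for part (a)---relating the sum to $-\frac{L'}{L}(\sigma,\piup\times\tilde\piup)$, handling the ramified primes as an $O(1)$ correction, and reading off the $\frac{1}{\sigma-1}$ from the simple pole at $s=1$---is correct and is essentially what the paper's citation of Liu--Ye contains. One caution: your proposed \emph{alternative} for (a), majorising by $C\cdot(-\zeta'/\zeta)(\sigma)$, does not go through without GRC. Nonnegativity of $\Lambda(n)a_{\piup\times\tilde\piup}(n)$ does not give a pointwise bound $a_{\piup\times\tilde\piup}(n)\leq C$, so there is no termwise comparison with $C\Lambda(n)$; stick with the pole argument.

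For part (b) your route is correct but genuinely different from the paper's. The paper deduces (b) from the prime number theorem \eqref{PNT} for $L(s,\piup\times\tilde\piup)$, i.e.\ $\sum_{n\leq X}\Lambda(n)a_{\piup\times\tilde\piup}(n)\sim X$ (obtained via Wiener--Ikehara), followed by partial summation. You instead use the Rankin trick: set $\sigma=1+1/\log X$ in (a) and use nonnegativity to pass from $n^{-\sigma}$ to $e^{-1}n^{-1}$ for $n\leq X$. Your argument is more self-contained, since it avoids the Tauberian step and derives (b) directly from (a); the paper's approach has the advantage of tying (b) to an asymptotic already needed elsewhere in the paper, but at the cost of invoking a deeper input.
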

\begin{proof}
See \cite[(6.4)]{LY2} for a proof of (a).
The bound in (b) is a consequence of \eqref{PNT} and partial summation.
\end{proof}

The next two lemmas show that $L(s, \piup\times\piup^\prime)$ has the necessary properties for application of Tur\'{a}n's power-sum method (see Lemma \ref{Fogels pole}). The first result is a version of  Linnik's ``density lemma" (\cite[p.\ 331]{Prachar}) for $L(s, \piup\times\piup')$.

\begin{Lem}[\bf Density Lemma]
\label{LinLem}
For $t_0\in \mathbb{R}$ and $r>0$ let $ \nu(t_0, r, \piup\times\piup^\prime)$ be the number of zeroes of $L(s, \piup\times\piup^\prime)$ lying in the disk $G(t_0, r) = \{s\in \mathbb{C};~|s - (1 + it_0)|\leq r$\}. Then for $1/\log Q_{\piup\times\piup^\prime}(|t_0|+2) \ll r \leq 2$ we have
\begin{equation*}\label{LinLemEq}
\nu(t_0, r, \piup\times\piup^\prime) \ll r \log Q_{\piup\times\piup^\prime}(|t_0| + 2).
\end{equation*}
\end{Lem}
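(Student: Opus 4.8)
The plan is to deduce this ``density lemma'' directly from the local estimate for the logarithmic derivative in Lemma \ref{LYlemma}(b) together with the zero-counting bound in Lemma \ref{LYlemma}(a), following the classical argument of Linnik. First I would fix $t_0$ and $r$ and consider the auxiliary point $s_1 = 1 + \delta + it_0$ with $\delta$ a small fixed multiple of $r$ (say $\delta = r$ or $\delta = 2r$, chosen so that all zeroes in $G(t_0,r)$ lie comfortably inside the disk $|s_1 - \rho| < 1$ and at distance $\leq Cr$ from $s_1$). The point of moving to the right of the line $\Re(s) = 1$ is that there $L(s,\piup\times\piup')$ is given by an absolutely convergent Euler product, so $\Re\bigl(-\frac{L'}{L}(s_1,\piup\times\piup')\bigr) = \sum_n \Lambda_{\piup\times\piup'}(n) n^{-(1+\delta)}\cos(t_0\log n)$ — but for the density lemma one only needs an \emph{upper} bound on this real part, and in fact crudely $-\frac{L'}{L}(s_1,\piup\times\piup') \ll 1/\delta \ll 1/r$, since $|\Lambda_{\piup\times\piup'}(n)| \ll \Lambda(n)\log n$ up to the contribution of the finitely many ramified primes, or alternatively one bounds it using $\sum \Lambda(n)|a_\piup(n)|^2 n^{-\sigma} \ll 1/(\sigma-1)$ from Lemma \ref{LYlemma2}(a) after comparing $a_{\piup\times\piup'}$ with $a_\piup$ (in the self-dual/degree-$\leq 2$ or Rankin--Selberg diagonal case).

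The core of the argument is then the standard ``non-negativity of $\Re\frac{1}{s_1-\rho}$'' trick. Apply \eqref{372} at $s = s_1$: we get
\begin{equation*}
\Re\sum_{\substack{\rho\\ |s_1-\rho|<1}} \frac{1}{s_1-\rho} = \Re\frac{L'}{L}(s_1,\piup\times\piup') + \Re\frac{r_0}{s_1} + \Re\frac{r_0}{s_1-1} + O\bigl(\log Q_{\piup\times\piup'}(|t_0|+2)\bigr),
\end{equation*}
and each summand on the left satisfies $\Re\frac{1}{s_1-\rho} = \frac{\Re(s_1-\rho)}{|s_1-\rho|^2} \geq 0$ because $\Re(\rho) \leq 1 < \Re(s_1)$. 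Hence every \emph{individual} term is bounded above by the right-hand side, which is $\ll \log Q_{\piup\times\piup'}(|t_0|+2)$ once we use $\delta \asymp r \gg 1/\log Q_{\piup\times\piup'}(|t_0|+2)$ so that $1/\delta \ll \log Q_{\piup\times\piup'}(|t_0|+2)$ as well (and $r_0/s_1$, $r_0/(s_1-1)$ are both $O(1/r)$). Now restrict the sum to the zeroes $\rho$ lying in the smaller disk $G(t_0,r)$: for such $\rho$ one has $|s_1 - \rho| \leq (1 + C)r$, so $\Re\frac{1}{s_1-\rho} \geq \frac{\delta}{|s_1-\rho|^2} \gg \frac{r}{r^2} = \frac{1}{r}$ (using $\Re(s_1 - \rho) \geq \delta \asymp r$ and $|s_1-\rho| \asymp r$). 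Therefore
\begin{equation*}
\frac{1}{r}\,\nu(t_0,r,\piup\times\piup') \ll \sum_{\rho \in G(t_0,r)} \Re\frac{1}{s_1-\rho} \leq \sum_{\substack{\rho\\ |s_1-\rho|<1}} \Re\frac{1}{s_1-\rho} \ll \log Q_{\piup\times\piup'}(|t_0|+2),
\end{equation*}
which rearranges to the claimed $\nu(t_0,r,\piup\times\piup') \ll r\log Q_{\piup\times\piup'}(|t_0|+2)$.

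Two technical points need care. First, the sum in \eqref{372} runs over all $\rho$ (trivial and non-trivial) with $|s_1-\rho|<1$; since $s_1$ is near the line $\Re(s)=1$ and $r \leq 2$, the relevant disk of radius $1$ sits inside the critical strip away from the finitely many trivial zeroes provided $t_0$ is bounded or, for large $|t_0|$, there are simply no trivial zeroes in range — in either case the trivial zeroes contribute a bounded amount (absorbed into the $O(\log Q_{\piup\times\piup'}(|t_0|+2))$ error), and in fact one can just include them harmlessly since they also have $\Re\rho < \Re s_1$. Second, one must make sure the geometric constants line up so that every zero in $G(t_0,r)$ genuinely satisfies $|s_1-\rho|<1$: since $\rho \in G(t_0,r)$ gives $|s_1-\rho| \leq |s_1 - (1+it_0)| + |(1+it_0)-\rho| \leq \delta + r \leq 3r \leq 6$, this forces the choice $\delta \leq cr$ with a small absolute constant, and then one shrinks to, say, $r \leq 2$ replaced by a slightly smaller range or rescales — a routine adjustment of constants. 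The main obstacle, such as it is, is not conceptual but bookkeeping: verifying that the lower cutoff $r \gg 1/\log Q_{\piup\times\piup'}(|t_0|+2)$ is exactly what makes the $1/\delta$ term and the $r_0$-polar terms no larger than the main error term $\log Q_{\piup\times\piup'}(|t_0|+2)$, so that the bound is clean and uniform. I expect this to follow immediately from the stated hypotheses, with the only genuine input being Lemma \ref{LYlemma}(b); Lemma \ref{LYlemma}(a) is not even strictly needed here but provides the sanity check $\nu \ll \log Q_{\piup\times\piup'}(|t_0|+2)$ for $r \asymp 1$.
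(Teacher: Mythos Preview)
Your proposal is correct and follows essentially the same route as the paper: evaluate \eqref{372} at $s_0=1+r+it_0$, bound $\Re\frac{L'}{L}(s_0,\piup\times\piup')\ll 1/r$ via Cauchy--Schwarz and Lemma~\ref{LYlemma2}(a), use positivity of $\Re\frac{1}{s_0-\rho}$ to restrict to $\rho\in G(t_0,r)$, and lower-bound each such term by $\gg 1/r$. One small correction: contrary to your final remark, Lemma~\ref{LYlemma}(a) \emph{is} needed (not merely a sanity check), since for $r$ close to $2$ the disk $G(t_0,r)$ is not contained in $\{|s_0-\rho|<1\}$ and the $s_0$-argument as written only covers, say, $r<1/2$ --- the paper handles $r\gg 1$ by invoking Lemma~\ref{LYlemma}(a) directly, which is the honest version of your ``routine adjustment of constants''.
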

\begin{proof}
The lemma follows at once from part (a) of Lemma \ref{LYlemma} whenever $r\gg 1$. Accordingly choose $r< \frac{1}{2}$, whence the region $|s - (1 + it_0)| \leq r$ does not include the origin. Now we take the real part of $\frac{L'}{L}(s, \piup\times\piup^\prime)$ in (\ref{372}) and use the fact that $a_{\piup\times{{\piup}}^\prime}(n)=a_{\piup}(n) a_{{\piup}^\prime} (n)$ for $(q_{\piup}, q_{\piup^\prime})=1$. Then,  by part (a) of Lemma \ref{LYlemma2} and the Cauchy--Schwartz inequality, we have that
\begin{eqnarray*}
\Re \left(\frac{L'}{L}(s, \piup\times\piup^\prime)\right) \leq \bigg| \frac{L'}{L}(s, \piup\times\piup^\prime)\bigg| &\leq& 
\sum_{\substack{{p\mid (q_\piup, q_{\piup'})}\\{j, k}}} \frac{|\alpha_{\piup\times\piup^\prime}(j, k, p)|\log{p}}{|p^s-\alpha_{\piup\times\piup^\prime}(j, k, p) |}+\sum_{\substack{{n=1}\\{(n, (q_\piup, q_{\piup'}))=1}}}^{\infty} \frac{\Lambda(n) |a_\piup(n)||a_{\piup^\prime}(n)|}{n^\sigma} \\
&\ll&1+ \left\{ \sum_{n=1}^{\infty} \frac{\Lambda(n) |a_\piup(n)|^2 }{n^\sigma} \right\}^{1/2}  \left\{  \sum_{n=1}^{\infty} \frac{\Lambda(n)  |{a_{\piup^\prime(n)}}|^2}{n^\sigma}\right\}^{1/2}\\
&\ll& \frac{1}{\sigma - 1},
\end{eqnarray*}
when $1<\sigma\leq 2$.
By employing this inequality when $s_0 = 1+ r + it_0$, we can write (\ref{372}) as
\begin{equation}\label{loe}
\Re \left(\sum_{\substack{{\rho}\\{|s_0- \rho| < 1}}} \frac{1}{s_0-\rho} \right)\ll \frac{1}{r} + \log Q_{\piup \times \piup^\prime}(|t_0| + 2).
\end{equation}
It is easy to show that each of the summands in (\ref{loe}) is non-negative, indeed it follows that
\begin{equation}\label{loe2}
\Re \left(\sum_{\substack{{\rho}\\{|s_0-\rho| < 1}}} \frac{1}{s_0-\rho}\right) \geq \Re\left( \sum_{\rho \in G(t_0, r)} \frac{1}{s_0-\rho} \right)\geq \nu(t_0, r, \piup\times\piup^\prime) \min_{\rho\in G(t_0, r)} \frac{\Re(s_0- \rho)}{|s_0-\rho|^{2}},
\end{equation}
where in the first inequality we used the fact that $r<1/2$. 
Now observe that  if $\rho = \beta + i\gamma$, then, since $\rho\in G(t_0, r)$,
\begin{equation*}
\frac{\Re(s_0- \rho)}{|s_0-\rho|^{2}} = \frac{1+r  - \beta}{(r+1 - \beta)^2 + (t_0 - \gamma)^2} 
\geq \frac{1  - \beta}{2r^{2} + 2r(1-\beta)} = \frac{1}{2r}.
\end{equation*}
Thus (\ref{loe}) and (\ref{loe2}) show that
\begin{equation*}
\nu(t_0, r, \piup\times\piup^\prime) \ll 1 + r\log Q_{\piup\times\piup^\prime}(|t_0| + 2).
\end{equation*}
The lemma follows upon taking $r\gg 1/\log Q_{\piup\times\piup^\prime}(|t_0|+2)$.
\end{proof}

\begin{Lem}
\label{SecondLem}
Let $0< \theta_{0} \leq 1$ and suppose that ${L}(s, \piup\times\piup^\prime)$ has zeroes $\rho=\beta+i\gamma$ in the half plane $\sigma> 1- \theta_{0}$. Then, for any real $t_{0}$, and for $s$ in the disk $|s-(1+it_{0})| < \theta_{0}/2$, we have
\begin{equation}
\label{explicit}
 \frac{L'}{L}(s, \piup\times\piup^\prime)+\frac{r_0}{s}+\frac{r_0}{s-1} - \sum_{\substack{{\rho}\\{|\rho - (1 + it_{0})| < \theta_{0}}}} \frac{1}{s-\rho} \ll \log Q_{\piup \times \piup^\prime}(|t_{0}|+2).
\end{equation}
Here $r_0=0$ if $\piup^\prime\neq\tilde{\piup}$ and $r_0=1$ if $\piup^\prime=\tilde{\piup}$.
\end{Lem}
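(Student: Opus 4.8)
The plan is to deduce \eqref{explicit} from part~(b) of Lemma~\ref{LYlemma}, which expresses $\tfrac{L'}{L}(s,\piup\times\piup^\prime)+\tfrac{r_0}{s}+\tfrac{r_0}{s-1}$, up to an error of size $O(\log Q_{\piup\times\piup^\prime}(|t|+2))$, as the sum of $\tfrac{1}{s-\rho}$ over the zeros $\rho$ with $|s-\rho|<1$. One must then show that replacing that sum by the sum over the zeros with $|\rho-(1+it_0)|<\theta_0$ changes it by at most $O(\log Q_{\piup\times\piup^\prime}(|t_0|+2))$.

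First I would note that the left-hand side of \eqref{explicit} is holomorphic on the disk $|s-(1+it_0)|<\theta_0/2$: the subtracted sum cancels every pole of $\tfrac{L'}{L}(s,\piup\times\piup^\prime)$ coming from a zero of $L(s,\piup\times\piup^\prime)$ in that disk, the term $r_0/(s-1)$ cancels the pole at $s=1$ that is present when $\piup^\prime=\tilde{\piup}$, and $r_0/s$ is regular there since $s=0$ lies outside the disk. Hence it suffices to prove the bound for $s$ in the disk with $s\notin Z_{\piup\times\piup^\prime}$ and $s\neq 0,1$, the general case following by continuity. For such $s$, Lemma~\ref{LYlemma}(b) applies: $\tfrac12<\Re(s)<\tfrac32$ lies in its admissible strip, and $|\Im(s)|<|t_0|+1$, so that $\log Q_{\piup\times\piup^\prime}(|\Im(s)|+2)\ll\log Q_{\piup\times\piup^\prime}(|t_0|+2)$.

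Writing $w=1+it_0$, it remains to bound
\begin{equation*}
\sum_{\substack{\rho\\ |s-\rho|<1}}\frac{1}{s-\rho}\;-\;\sum_{\substack{\rho\\ |\rho-w|<\theta_0}}\frac{1}{s-\rho}.
\end{equation*}
The two index sets differ only through: (i) zeros with $|s-\rho|<1$ and $|\rho-w|\geq\theta_0$, for which $|s-\rho|\geq|\rho-w|-|s-w|>\theta_0/2$ and, since $|s-w|<\theta_0/2\leq\tfrac12$, also $|\gamma-t_0|<2$; and (ii) zeros with $|s-\rho|\geq1$ and $|\rho-w|<\theta_0$, for which $|s-\rho|^{-1}\leq1$ and $|\gamma-t_0|<\theta_0\leq1$. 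By the local zero-count in Lemma~\ref{LYlemma}(a) there are $\ll\log Q_{\piup\times\piup^\prime}(|t_0|+2)$ zeros with $|\gamma-t_0|<2$, which handles the type-(ii) zeros at once. For the type-(i) zeros I would decompose the annulus $\theta_0\leq|\rho-w|<2$ into dyadic shells $\delta\leq|\rho-w|<2\delta$: on each shell every term is $\ll\delta^{-1}$, while the Density Lemma (Lemma~\ref{LinLem}) shows the shell contains $\ll\delta\log Q_{\piup\times\piup^\prime}(|t_0|+2)$ zeros (together with an $O(1)$ term when $\delta\ll1/\log Q_{\piup\times\piup^\prime}(|t_0|+2)$). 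Summing over the relevant shells then gives the desired bound; in every application of this lemma $\theta_0$ is a fixed positive constant, which makes harmless both the lower restriction on $r$ in the Density Lemma and the factor $\log(1/\theta_0)$ coming from the number of shells.

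The only real obstacle is the treatment of the type-(i) zeros: the crude estimate --- the number of zeros with $|\gamma-t_0|<2$ times $\sup_\rho|s-\rho|^{-1}$ --- would cost a factor $\theta_0^{-1}$, and it is exactly the non-clustering of the zeros of $L(s,\piup\times\piup^\prime)$ near the point $1+it_0$, as quantified by the Density Lemma, that is needed to absorb it. The rest is the standard Hadamard-product bookkeeping already built into Lemma~\ref{LYlemma}.
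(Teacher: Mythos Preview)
Your proof is correct and rests on the same idea as the paper's --- invoke Lemma~\ref{LYlemma}(b) and absorb the mismatch between the two zero-sums via the Density Lemma --- but the paper's bookkeeping is more economical. Instead of applying \eqref{372} with radius $1$ as stated, the paper applies it with radius $3\theta_0/2$ (valid by the same standard argument). With that choice, every zero satisfying $|\rho-(1+it_0)|<\theta_0$ automatically has $|s-\rho|<3\theta_0/2$, so your type-(ii) terms never arise, and the surplus zeros all lie in the single annulus $\theta_0\le|\rho-(1+it_0)|<2\theta_0$; one invocation of Lemma~\ref{LinLem} then handles them without any dyadic decomposition. Your shell argument yields a tidier $\theta_0$-dependence (a factor $\log(1/\theta_0)$ in place of the paper's implicit $1/\theta_0$), but as you correctly note, $\theta_0$ is a fixed constant in every application of the lemma, so this refinement is not needed.
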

\begin{proof}
Let $s$ be fixed in the disk $|s-(1+it_{0})| < \theta_{0}/2$. Recall from \eqref{372} that
\begin{equation}
\label{new2.4}
\frac{L'}{L}(s, \piup\times\piup^\prime)+\frac{r_0}{s}+\frac{r_0}{s-1} = \sum_{\substack{{\rho}\\{|s-\rho|<3\theta_0/2}}} \frac{1}{s-\rho}  + O\left( \log Q_{\piup\times\piup^\prime}( |t|+2)\right).
\end{equation}
Note that
\begin{equation}\label{j24}
\sum_{\substack{{\rho}\\{|s-\rho| <3\theta_0/2}}}\frac{1}{s-\rho} = \sum_{\substack{{\rho}\\{|s-\rho| <3\theta_0/2}\\{|\rho - (1+ it_{0})}| < \theta_{0}}} \frac{1}{s-\rho}+ \sum_{\substack{{\rho}\\{|s-\rho| <3\theta_0/2}\\{\theta_{0}\leq |\rho-(1+it_{0})|< 2 \theta_{0}}}} \frac{1}{s-\rho}.
\end{equation}
Given that (\ref{new2.4}) is sought for $|s-(1+it_{0})|<\theta_{0}/2$ it follows that $|t|\leq |t_0|+\theta_0/2$ and also,   in the second sum on the right-hand side of (\ref{j24}), $|s-\rho| > \theta_{0}/2$. Thus $|s-\rho|$ is bounded away from zero
and each of its summands is $O(1)$ and moreover, by Lemma \ref{LinLem}, there are $\ll \log Q_{\piup\times\piup^\prime}(|t_0|+2)$ summands. This proves (\ref{explicit}).
\end{proof}

The next lemma establishes a standard zero-free region for $L(s, \piup\times\piup^\prime)$ under certain conditions.

\begin{Lem}[\bf Zero-free Region]
\label{zerofree}
Suppose that $m=m^\prime=2$ or at least one of $\piup$ or $\piup^\prime$ is self-dual. Then there exists a constant $c_0>0$, depending only on $m$ and $m^\prime$, such that in the region
$$\sigma>1-\frac{c_0}{\log{Q_{\piup\times\piup^\prime}(|t|+2)}}$$
we have $L(s, \piup\times\piup^\prime)\neq 0$ with at most one simple real exceptional zero $\beta_0<1$. For this real $\beta_0$ to exist, it is necessary that both $\piup$ and $\piup^\prime$ be self-dual.
\end{Lem}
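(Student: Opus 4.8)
The plan is to run a de la Vall\'ee Poussin--type argument: combine the nonnegativity of the Dirichlet coefficients of the logarithmic derivative of a ``Rankin--Selberg square'' $L(s,\Pi\times\tilde\Pi)$ with a high-degree nonnegative cosine polynomial to locate any zero of $L(s,\piup\times\piup^\prime)$ near the line $\sigma=1$. The crucial positivity is $-\frac{d}{ds}\log L(s,\Pi\times\tilde\Pi)=\sum_n\Lambda_{\Pi\times\tilde\Pi}(n)n^{-s}$ with $\Lambda_{\Pi\times\tilde\Pi}(n)\ge 0$; the whole argument hinges on choosing the isobaric automorphic representation $\Pi$ --- built from $\piup,\piup^\prime$, their contragredients, and, in the $\mathrm{GL}_2\times\mathrm{GL}_2$ case, the functorial lifts available on $\mathrm{GL}_2$ (Gelbart--Jacquet symmetric squares, Ramakrishnan's $\piup\boxtimes\piup^\prime$, Kim's exterior square) --- so that $L(s,\piup\times\piup^\prime)$, or the self-dual product $L(s,(\piup\boxplus\tilde{\piup})\times(\piup^\prime\boxplus\tilde{\piup}^\prime))$ that contains it, divides $L(s,\Pi\times\tilde\Pi)$ with a multiplicity $k$ that is large relative to the order $r$ of the pole of $L(s,\Pi\times\tilde\Pi)$ at $s=1$.

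First I would construct $\Pi$. Replacing $\piup$ by $\piup\boxplus\tilde{\piup}$, and $\piup^\prime$ by $\piup^\prime\boxplus\tilde{\piup}^\prime$, whenever these fail to be self-dual, I may assume I work with self-dual isobaric $\Sigma,\Sigma^\prime$ and take $\Pi=\Sigma\boxplus\Sigma^\prime$; then $F(s):=L(s,\Pi\times\tilde\Pi)$ has nonnegative coefficients, is holomorphic except for a pole at $s=1$ of order $r=\sum_\tau m_\tau^2$ (sum over the cuspidal constituents $\tau$ of $\Pi$ with multiplicities $m_\tau$), and --- since $L(s,\Sigma\times\Sigma^\prime)=L(s,\Sigma^\prime\times\Sigma)$ appears twice in $F$ --- it carries the relevant factor with multiplicity $k\ge 2$. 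Under the stated hypotheses this can be arranged so that $r<2k$, the borderline case $r=2k$ occurring precisely when $L(s,\piup\times\piup^\prime)$ is itself self-dual --- which, given $m=m^\prime=2$ or one factor self-dual, happens exactly when both $\piup$ and $\piup^\prime$ are self-dual. All the analytic input about $F$ used below (holomorphy apart from the pole at $s=1$, a conductor bound, and the Hadamard/partial-fraction expansion of $F^\prime/F$) is Lemma~\ref{LYlemma} and Lemma~\ref{SecondLem} applied with $\Pi\times\tilde\Pi$ in place of $\piup\times\piup^\prime$.

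Next, given a zero $\rho=\beta+i\gamma$ of $L(s,\piup\times\piup^\prime)$ with $\beta$ near $1$, fix a nonnegative cosine polynomial $P(\theta)=\sum_{n=0}^{N}c_n\cos n\theta$ with $c_n\ge 0$ and $c_1/c_0$ as close as we like to $2$ (possible by Fej\'er's theorem for $N$ large). For $\sigma>1$,
\[
0\le \sum_{n=0}^{N}c_n\Bigl(-\Re\frac{F^\prime}{F}(\sigma+in\gamma)\Bigr)
\]
because $\sum_n\Lambda_{\Pi\times\tilde\Pi}(n)n^{-\sigma}P(\gamma\log n)\ge 0$. Inserting Lemma~\ref{SecondLem}: the pole of $F$ contributes $\tfrac{r}{\sigma-1}$ to $-F^\prime/F(\sigma)$, the zero $\rho$ contributes $-\tfrac{k}{\sigma-\beta}$ to $\Re(-F^\prime/F)(\sigma+i\gamma)$, all remaining zero-terms are $\le 0$, and the rest is $O(\log Q_{\piup\times\piup^\prime}(|\gamma|+2))$. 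For $|\gamma|$ bounded below the pole's contributions at the points $\sigma+in\gamma$, $n\ge 1$, are $O(1)$, so with $\sigma=1+\delta/\log Q_{\piup\times\piup^\prime}(|\gamma|+2)$ for small $\delta$ one obtains $0\le \tfrac{c_0 r}{\sigma-1}-\tfrac{c_1 k}{\sigma-\beta}+O(\log Q_{\piup\times\piup^\prime}(|\gamma|+2))$; since $c_1/c_0$ can be taken larger than $r/k$ (as $r<2k$), this is impossible once $\beta\ge 1-c_0/\log Q_{\piup\times\piup^\prime}(|\gamma|+2)$ with $c_0$ small. Hence the zero-free region holds for $|\gamma|$ bounded away from $0$.

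Finally I would treat $|\gamma|$ small, in particular real zeros, where the evaluation points coalesce at the pole and the cosine trick degenerates. Here one uses only $0\le -F^\prime/F(\sigma)\le \tfrac{r}{\sigma-1}-\tfrac{\mu}{\sigma-\beta_0}+O(\log Q_{\piup\times\piup^\prime})$, where $\mu$ is the order of vanishing of $F$ at a hypothetical real zero $\beta_0$ of $L(s,\piup\times\piup^\prime)$; since $\beta_0$ is real, $\overline{L(\bar s,\piup\times\piup^\prime)}$ also divides $F$ and vanishes at $\beta_0$, so away from the self-dual case $\mu$ is at least $2k$, overshooting $r$, while a double $\beta_0$ (or two such zeros) overshoots $r$ in every case. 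This yields at most one real $\beta_0$ in the region, necessarily simple, and only in the borderline self-dual situation --- i.e., only when both $\piup$ and $\piup^\prime$ are self-dual. The main obstacle is the construction of $\Pi$: each cuspidal constituent of $\Pi$ buys another simple pole for $F$, so making $k$ outweigh $r$ is a tight budget, and this is precisely where the hypotheses ($m=m^\prime=2$, via the $\mathrm{GL}_2$ lifts, or one factor self-dual) are used; the surviving exceptional zero is genuine --- it is the possible Landau--Siegel zero of the self-dual $L(s,\piup\times\piup^\prime)$ --- and cannot be removed by this method.
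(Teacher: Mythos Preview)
The paper does not provide an independent proof; it cites \cite[Theorem~5.44(1)]{IK}, \cite[Theorem~3.3]{M2}, and \cite[p.~92]{GLS}. Your sketch is precisely the argument behind the first two references---Rankin--Selberg positivity for an auxiliary isobaric $\Pi$ fed into a de~la~Vall\'ee~Poussin inequality---so in the case where at least one factor is self-dual your approach and the paper's invoked argument coincide.

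Two corrections. First, your bookkeeping of the borderline is inverted. With $\Pi=\Sigma\boxplus\Sigma'$ one finds $(r,k)=(2,2)$ if both $\piup,\piup'$ are self-dual, $(r,k)=(3,2)$ if exactly one is, and $(r,k)=(4,2)$ if neither is; thus $r<2k$ in the first two cases and $r=2k$ in the third---the opposite of what you wrote. The place where the real-zero argument genuinely stalls is $k=r$ (not $2k=r$), and that occurs exactly when both factors are self-dual, which is why the possible Siegel zero survives only there. The slip is expository, not fatal to the argument.

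Second, for $m=m'=2$ with neither factor self-dual, your basic $\Pi$ gives $r=2k$, so the cosine-polynomial inequality is exactly borderline for complex zeros and yields nothing. You correctly flag that extra input is needed here; one clean route is Ramakrishnan's automorphy of $\piup\boxtimes\piup'$ on $\mathrm{GL}_4$, after which the standard $\mathrm{GL}_n$ argument with $\Pi=\mathbf{1}\boxplus(\piup\boxtimes\piup')\boxplus(\tilde\piup\boxtimes\tilde\piup')$ gives $(r,k)=(3,2)$ when the lift is cuspidal and not self-dual. The paper's citation for this case, however, is \cite{GLS}, which uses a genuinely different mechanism---lower bounds for $|L(1+it,\piup\times\piup')|$ coming from the constant term of maximal-parabolic Eisenstein series---rather than positivity and de~la~Vall\'ee~Poussin. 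Either route closes the gap, but they are not the same argument.
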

\begin{proof}
See \cite[Theorem 5.44 (1)]{IK}, \cite[Theorem 3.3]{M2}, and \cite[p.\ 92]{GLS}.
\end{proof}

For $V$ distinct real numbers $w_1<w_2<\cdots<w_V$, set
\begin{equation}
\label{Smn}
S(m, n)=\sum_{1\leq j \leq V} \left( \frac{m}{n} \right)^{iw_j}.
\end{equation}
Estimating the size of $S(m, n)$ for certain values of $w_j$ is  important in the proof of Theorem \ref{thm3}. We also introduce the closely related sum
\begin{equation}
\label{gmn}
g(\mu, n)=\sum_{1\leq j\leq V} e^{iw_j(\mu-\log{n})},
\end{equation}
where $\mu$ is a real parameter.
It is useful to record the following two lemmas regarding $S(m, n)$ and $g(\mu, n)$.

\begin{Lem}\label{Sarastro}
Suppose that for a positive integer $m$ there exists a real number $\mu_m$ such that
\begin{equation*}
|\log m - \mu_{m}| \ll V^{\delta-1}~~~{\rm and}~~~ g(\mu_m, n)\leq V^\delta,
\end{equation*}
where $0\leq \delta<1$ and $V>0$. Then we have 
$S({m, n}) \ll V^{\delta}$. \end{Lem}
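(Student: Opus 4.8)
The plan is to pass from the sum $S(m,n)=\sum_{1\le j\le V}(m/n)^{iw_j}$ to the sum $g(\mu_m,n)=\sum_{1\le j\le V}e^{iw_j(\mu_m-\log n)}$ by comparing their summands one at a time. Writing $(m/n)^{iw_j}=e^{iw_j(\log m-\log n)}$ and $e^{iw_j(\mu_m-\log n)}$, the difference of the two exponentials is $e^{iw_j(\mu_m-\log n)}\bigl(e^{iw_j(\log m-\mu_m)}-1\bigr)$, whose modulus is $\bigl|e^{iw_j(\log m-\mu_m)}-1\bigr|\le |w_j|\,|\log m-\mu_m|$, using the elementary bound $|e^{ix}-1|\le|x|$ for real $x$.

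First I would therefore write
\[
S(m,n)-g(\mu_m,n)=\sum_{1\le j\le V}e^{iw_j(\mu_m-\log n)}\bigl(e^{iw_j(\log m-\mu_m)}-1\bigr),
\]
and bound this by $\sum_{1\le j\le V}|w_j|\,|\log m-\mu_m|$. Since the $w_j$ are (as they will be in the application) bounded by something of size $O(V^{?})$, the crucial input is that $|\log m-\mu_m|\ll V^{\delta-1}$, which kills the factor $V$ coming from the $V$ terms and leaves a bound of the right shape. Concretely, if $|w_j|\ll 1$ for all $j$ — or more generally if $\max_j|w_j|$ times $V^{\delta-1}$ times $V$ is $\ll V^\delta$ — then $|S(m,n)-g(\mu_m,n)|\ll V\cdot V^{\delta-1}=V^\delta$. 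Hence $S(m,n)=g(\mu_m,n)+O(V^\delta)$, and combining with the hypothesis $g(\mu_m,n)\le V^\delta$ gives $S(m,n)\ll V^\delta$, which is the claim.

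The one point needing care is the size of the $w_j$: the statement as given does not spell out a bound on them, so the argument implicitly relies on the $w_j$ being restricted to a bounded (or suitably controlled) range, which will be the case in the concrete setting of Theorem \ref{thm3} where the $w_j$ are ordinates of zeros in a box of bounded height. I would make this explicit by noting that in the regime of interest $|w_j|\le W$ with $W\ll 1$ (or absorb $W$ into the implied constant), so that the telescoping estimate yields exactly $S(m,n)\ll V^\delta$.

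The main obstacle — really the only subtlety — is keeping the bookkeeping on the ranges honest: one must ensure that the product of (number of terms) $\times$ (size of $w_j$) $\times$ ($|\log m-\mu_m|$) does not exceed $V^\delta$, which is where the precise normalisation $|\log m - \mu_m|\ll V^{\delta-1}$ is used. Everything else is the elementary inequality $|e^{ix}-1|\le|x|$ and the triangle inequality, so no deeper tool is required.
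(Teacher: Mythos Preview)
Your approach is essentially identical to the paper's: both write $\log m = \mu_m + O(V^{\delta-1})$, expand each exponential summand, use $|e^{ix}-1|\le|x|$ to bound the discrepancy termwise, and then sum over the $V$ terms to get $g(\mu_m,n)+O(V^\delta)$. In that sense your proof is correct and matches the paper's.

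You are also right to flag the size of the $w_j$ as the one delicate point; the paper's proof glosses over this and simply writes the error as $O\bigl(\sum_{1\le j\le V}V^{\delta-1}\bigr)$, tacitly treating each $|w_j|$ as $O(1)$. However, your proposed resolution --- that ``the $w_j$ are ordinates of zeros in a box of bounded height'' so that $|w_j|\ll 1$ --- is not what actually happens in the application. In \S4 of the paper the $w_j$ are defined by $\tau_j=-T+w_j$ with $\tau_j\in[-T,T]$, so in fact $0<w_j\le 2T$, which is far from bounded. What saves the argument in \S4.4 is that there one has the much stronger bound $|\log m-\mu_m|<2/(TV^{1/4})$, not merely $|\log m-\mu_m|\ll V^{\delta-1}$; the extra factor $T^{-1}$ cancels the $|w_j|\le 2T$, and with $\delta=7/8$ one gets $\sum_j|w_j|\,|\log m-\mu_m|\ll V\cdot V^{-1/4}=V^{3/4}\le V^{7/8}$. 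So the lemma, read literally with no hypothesis on the $w_j$, is incomplete in both the paper and your write-up; the correct fix is not $|w_j|\ll 1$ but rather to state the hypothesis as $\max_j|w_j|\cdot|\log m-\mu_m|\ll V^{\delta-1}$, which is what the application actually delivers.
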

\begin{proof}
If $c$ is the implied constant in $|\log m - \mu_{m}| \ll V^{\delta-1}$ and  $|\theta_m|\leq 1$ is the real number such that $\log{m}=\mu_m+c\theta_m V^{\delta-1}$, then
\begin{equation*}
\begin{split}
S({m, n}) &= \sum_{1\leq j \leq V} \exp \{ iw_{j}[\mu_{m} + {c\theta_m}{V^{\delta-1}} - \log n]\}\\
&=  \sum_{1\leq j \leq V} \exp \{ iw_{j}[\mu_{m} - \log n]\} + O\left(\sum_{1\leq j \leq V} V^{\delta-1}\right) \ll V^{\delta}.
\end{split}
\end{equation*}
\end{proof}

\begin{Lem}\label{Pamina}
Let $T$ and $\lambda$ be real numbers and let $c^\prime$ be a fixed positive constant.  Let $0<w_1<w_2<\cdots<w_V\leq 2T$ be real numbers such that each $w_i$ is an integer multiple of $c^\prime/\log{T}$, and for each $w_i$ there is an integer $m$ such that $w_i\in [\frac{m\lambda}{\log{T}}, \frac{(m+1)\lambda}{\log{T}}]$.  Suppose that  $V\ll \min\{e^{c\lambda} \log{T}, \log^2{T}\}$ for some positive constant $c$, where $V$ is the number of $w_i$'s in the interval $[0, 2T]$. Then, for any $0\leq \delta<1$, we have
$$\texttt{meas}\{ \mu\in [\mu_1, \mu_2];~|g(\mu, n)|>V^\delta\} \ll V^{1-2\delta}(\mu_2-\mu_1+e^{c\lambda}\log{T}),$$
where $\texttt{meas}$ denotes the Lebesgue measure.
\end{Lem}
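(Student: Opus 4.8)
The plan is to bound the large-values set by a second-moment (mean value) estimate followed by Chebyshev's inequality. First I would observe that $g(\mu,n)=\sum_{1\le j\le V}e^{iw_j\mu}e^{-iw_j\log n}$ depends on $n$ only through the substitution $\mu\mapsto\mu-\log n$, so after translating $[\mu_1,\mu_2]$ (which leaves its length unchanged) one may take $n=1$; this also explains why the asserted bound does not involve $n$. Chebyshev's inequality then gives
\[
\texttt{meas}\{\mu\in[\mu_1,\mu_2];~|g(\mu,n)|>V^\delta\}\le\frac{1}{V^{2\delta}}\int_{\mu_1}^{\mu_2}|g(\mu,n)|^2\,d\mu,
\]
so it suffices to prove the mean value estimate $\int_{\mu_1}^{\mu_2}|g(\mu,n)|^2\,d\mu\ll V(\mu_2-\mu_1+e^{c\lambda}\log T)$.

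Next I would expand $|g(\mu,n)|^2=\sum_{1\le j,k\le V}e^{i(w_j-w_k)(\mu-\log n)}$ and integrate term by term. The $V$ diagonal terms ($j=k$) contribute exactly $V(\mu_2-\mu_1)$, and the absolute value of the $(j,k)$-th off-diagonal term is at most $\left|\int_{\mu_1}^{\mu_2}e^{i(w_j-w_k)\mu}\,d\mu\right|\le 2/|w_j-w_k|$. Thus everything reduces to showing $\sum_{j\ne k}|w_j-w_k|^{-1}\ll Ve^{c\lambda}\log T$. Here the arithmetic structure enters: since the $w_i$ are distinct, increasing, and each an integer multiple of $c'/\log T$, writing $w_i=(c'/\log T)n_i$ with $n_1<\cdots<n_V$ integers forces $n_j-n_k\ge j-k$, hence $|w_j-w_k|\ge(c'/\log T)|j-k|$ and
\[
\sum_{\substack{1\le j,k\le V\\ j\ne k}}\frac{1}{|w_j-w_k|}\le\frac{2\log T}{c'}\sum_{1\le k<j\le V}\frac{1}{j-k}\ll\frac{\log T}{c'}\,V\log V.
\]

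To finish I would absorb the extra factor $\log V$ using the remaining hypotheses: the condition that each $w_i$ lies in a window $[m\lambda/\log T,(m+1)\lambda/\log T]$ of length $\lambda/\log T$, combined with the spacing $\ge c'/\log T$ of the $w_i$, bounds $V\ll\lambda/c'+1$ and hence $\log V\ll e^{c\lambda}$ (and in any event $V\ll e^{c\lambda}\log T$ together with $V\ll\log^2T$ keeps $\log V$ of admissible size); this turns the off-diagonal bound into $\ll Ve^{c\lambda}\log T$. Combining this with the diagonal contribution $V(\mu_2-\mu_1)$ and feeding the result into Chebyshev's inequality gives the lemma. I expect the off-diagonal sum to be the only real obstacle: the crude pairing loses a factor $\log V$ over the clean bound $V\log T$, and it is exactly to absorb this loss into a factor $e^{c\lambda}$ that the clustering of the $w_i$ and the upper bound on $V$ are imposed in the hypotheses. (If one is willing to quote the continuous large-sieve inequality for $\delta$-separated exponents one gets the slightly stronger $\int_{\mu_1}^{\mu_2}|g(\mu,n)|^2\,d\mu\ll V(\mu_2-\mu_1+\log T)$ with no appeal to these extra hypotheses, but the elementary route above is self-contained.)
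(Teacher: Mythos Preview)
Your overall strategy---Chebyshev's inequality applied to the second moment $\int_{\mu_1}^{\mu_2}|g|^2\,d\mu$, followed by the diagonal/off-diagonal split of the expanded square---is exactly the paper's approach. The divergence, and the gap, is in your handling of the off-diagonal sum $\sum_{j\ne k}|w_j-w_k|^{-1}$.

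Your crude spacing bound $|w_j-w_k|\ge (c'/\log T)|j-k|$ is correct and gives $\sum_{j\ne k}|w_j-w_k|^{-1}\ll (\log T)\,V\log V$. The problem is in absorbing the extra $\log V$. Your primary justification misreads the hypothesis: the statement says that \emph{for each} $w_i$ there exists \emph{some} integer $m$ with $w_i\in[m\lambda/\log T,(m+1)\lambda/\log T]$; different $w_i$ may (and in the application do) sit in different windows, so one cannot conclude $V\ll\lambda/c'+1$. Your fallback---that $V\ll\min\{e^{c\lambda}\log T,\log^2 T\}$ ``keeps $\log V$ of admissible size''---also fails: these hypotheses give only $\log V\ll\log\log T$, whereas $e^{c\lambda}$ may remain bounded (the lemma permits $\lambda$ to stay bounded as $T\to\infty$), so $\log V\ll e^{c\lambda}$ is not available in general.

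The paper treats the off-diagonal sum differently, via a Stieltjes-integral estimate that invokes both bounds $V\ll e^{c\lambda}\log T$ and $V\ll\log^2 T$ to obtain $\sum_{j>j'}(w_j-w_{j'})^{-1}\ll e^{c\lambda}\log T$ directly (stronger by a factor $V$ than what you were aiming for). Your closing parenthetical is in fact the cleanest repair: the continuous large-sieve inequality for frequencies separated by $\delta=c'/\log T$ yields $\int_{\mu_1}^{\mu_2}|g|^2\,d\mu\ll V(\mu_2-\mu_1+\log T)$, which is stronger than the stated conclusion and dispenses with both the window hypothesis and the upper bounds on $V$.
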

\begin{proof} We have
\begin{eqnarray}\label{alm1}
\int_{\mu_{1}}^{\mu_{2}} |g(\mu)|^{2}\, d\mu &=& \int_{\mu_{1}}^{\mu_{2}} \sum_{1\leq j, j'\leq V} e^{i(w_{j} - w_{j'})(\mu - \log n)}\, d\mu \nonumber\\
&= &\int_{\mu_{1} - \log n}^{\mu_{1} - \log n}\sum_{1\leq j, j'\leq V} e^{i(w_{j} - w_{j'})\phi}\, d\phi \nonumber\\
&\ll& \sum_{1\leq j \leq V} (\mu_{2} - \mu_{1}) + \sum_{\substack{1\leq j, j'\leq V\\ j\neq j'}} \frac{1}{|w_{j} - w_{j'}|}\nonumber\\
&\ll& V(\mu_{2} - \mu_{1}) + \sum_{\substack{1\leq j' \leq V\\ j>j'}} \frac{1}{w_{j} - w_{j'}}.
\end{eqnarray}
One may estimate the final sum in (\ref{alm1}) using Stieltjes integrals, whence
\begin{equation*}
\sum_{j> j'} \frac{1}{w_{j} - w_{j'}} \ll \log{T} \left( \left[ \frac{\log^2{u}}{u}\right]_{1}^{\infty}+e^{c\lambda} \int_{1}^{\infty}\frac{\log u}{u^{2}}\, du \right)\ll e^{c\lambda}\log{T}.
\end{equation*}
(Note that $V\ll \min\{e^{c\lambda} \log{T}, \log^2{T}\}$.) Thus (\ref{alm1}) shows that $\int_{\mu_{1}}^{\mu_{2}} |g(\mu)|^{2}\, d\mu \ll V(\mu_2-\mu_1+e^{c\lambda}\log{T})$ and the lemma follows. 
\end{proof}

\section{Fogels' pole detection lemma}
In this section we describe a fundamental lemma in Fogels' method of establishing log-free zero-density estimates. The lemma itself is based on Tur\'{a}n's power-sum method.

Let $D>2$, $0<\theta_0\leq 1$, and $c_0>0$. Let $F(s)$  be a meromorphic function defined on $\sigma>1-\theta_0$ with simple poles $\rho=\beta+i\gamma$ of positive residue $m_\rho$ which lie in $1-\theta_0<\sigma\leq 1$ and possibly a simple pole of residue $-1$ at $s=1$. 
Moreover assume the truth of the following two statements for $F(s)$.

(i) If $\nu(t_{0}, r, F)$ denotes the number of poles of $F(s)$ in the disk $|s-(1+t_{0})| \leq r$, then 
\begin{equation}\label{Linnikd}
\nu(t_0, r, F) \ll r \log D( |t_{0}|+2),
\end{equation}
where $ c_{0}/\log D(|t_{0}|+2) \leq r \leq \theta_0$. 

(ii) For 
any real $t_{0}$ we have
\begin{equation}\label{372.1}
 F(s)+\frac{r_0}{s-1} - \sum_{\substack{{\rho}\\{|\rho - (1 + it_{0})| < \theta_{0}}}} \frac{m_\rho}{s-\rho} \ll \log D(|t_{0}|+2), \end{equation}
for $|s-(1+it_{0})| < \theta_{0}/2$, where $r_0=1$ if $F(s)$ has a pole at $s=1$ and $r_0=0$ otherwise.

\noindent For $\lambda\in [0, \log{D}]$ and $-D\leq t_0 \leq D$ we consider the square
$$S(\lambda, t_0, D)=\{s=\sigma+it;\, 1- \lambda/\log D\leq \sigma \leq 1, \, |t-t_{0}|\leq \lambda /2\log D\}.$$
For $c_0\leq \lambda \leq \log{D}$, set $A=\lambda^{-1} \log{D}$. Also for an integer $k\geq 2$, and real $\tau$, we define
\begin{equation}\label{actJdef}
J(\tau, k, A) = -\frac{1}{2\piup i}\int_{2-i\infty}^{2+ i\infty} \left(\frac{e^{3As} - e^{As}}{2As}\right)^{k}F(1+ s + i\tau)\, ds.
\end{equation}

Under the above conditions Fogels proved the following lemma.

\begin{Lem}[\bf Fogels]
\label{Fogels pole}
There are positive constants $E$, $c_1~(\leq \theta_0)$, $b_0$, and an integer $k\geq 2$ such that for $\lambda \in [c_0, c_1\log{D}]$, if there is a pole of $F(s)$ in   $S(\lambda, t_0, D)$, then for all $\tau\in [t_0-\lambda/2\log{D}, t_0+\lambda/2\log{D}]$ we have
\begin{equation*}
|J(\tau, k, \lambda^{-1}\log D)| > e^{-b_0\lambda},
\end{equation*}
as long as $|t_0|\geq E\lambda/\log{D}$.
Moreover $\alpha_2 \lambda\leq k <(\alpha_1+\alpha_2)\lambda$, where $\alpha_1>1$ is an appropriate constant and $\alpha_2\geq \alpha_1^2$ can be chosen arbitrarily.  The constant $c_1$ depends on $\theta_0$ and the constant $b_0$ may depend on $\alpha_1$ and $\alpha_2$.
\end{Lem}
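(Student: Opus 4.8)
The plan is to prove Lemma \ref{Fogels pole} via Tur\'an's power-sum method applied to the integral $J(\tau,k,A)$. First I would evaluate $J(\tau,k,A)$ on the prime-power side. The kernel $\left(\frac{e^{3As}-e^{As}}{2As}\right)^k$ is, up to normalisation, the $k$-fold convolution of the indicator of the interval $[A,3A]$, so it is a nonnegative function supported on $[kA,3kA]$; shifting the contour and expanding $F(1+s+i\tau)$ as its Dirichlet series $\sum_n c(n) n^{-1-s-i\tau}$ (where the $c(n)$ encode $\Lambda(n)a_{\piup\times\piup'}(n)$-type coefficients) turns $J$ into a weighted sum $\sum_n \frac{c(n)}{n^{1+i\tau}} K_k(\log n)$ over $n$ with $\log n\in[kA,3kA]$, i.e. over a short dyadic-type range of $n$ around $e^{2kA}$. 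The crucial input here is Hypothesis \ref{HypothesisA} (via the remark giving $\sum_{X<n\le X+Y}\Lambda(n)|a_{\piup\times\tilde\piup}(n)|\ll Y$ for $Y\ge X^{1-\epsilon'}$), which, combined with $A=\lambda^{-1}\log D$ and the range $\lambda\le c_1\log D$, guarantees the length of the $n$-interval is long enough relative to its location that the coefficient sum is well-controlled; this is precisely where the hypothesis "plays a crucial role.''

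Second, I would evaluate $J(\tau,k,A)$ on the pole side by shifting the contour in \eqref{actJdef} to the left past $\sigma=1-\theta_0$ and picking up residues. Using condition (ii), equation \eqref{372.1}, $F(1+s+i\tau)$ differs from $\sum_{|\rho-(1+it_0)|<\theta_0}\frac{m_\rho}{1+s+i\tau-\rho} - \frac{r_0}{s+i\tau}$ by something $\ll \log D(|t_0|+2)$ on the relevant region, so moving the contour produces a main term $\sum_\rho m_\rho \left(\frac{e^{3A(\rho-1-i\tau)}-e^{A(\rho-1-i\tau)}}{2A(\rho-1-i\tau)}\right)^k$ plus a contribution from the pole at $s=1$ (absent once $|t_0|\ge E\lambda/\log D$ keeps $s=1$ away, up to harmless size) plus controlled error. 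Writing $\rho-1 = -\beta' + i\gamma'$ with $0\le\beta'<\theta_0$ and $\gamma'$ near $t_0-\tau$, the terms become a genuine power sum $\sum_\rho m_\rho z_\rho^k$ with $|z_\rho|$ of order $e^{-2A\beta'}\cdot(\text{bounded})$; if there is a pole in the square $S(\lambda,t_0,D)$ then $\beta'\le\lambda/\log D$ so $e^{-2A\beta'}\ge e^{-2}$, making at least one $z_\rho$ bounded below in modulus.

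Third, I would invoke Tur\'an's second main theorem (or the one-sided power-sum inequality): for a power sum $\sum_{\rho} m_\rho z_\rho^k$ with $m_\rho>0$ and the number of terms bounded by $\nu \ll \theta_0\log D(|t_0|+2)$ (Linnik's density lemma, condition (i)/Lemma \ref{LinLem}), there is a choice of $k$ in a range $[\alpha_2\lambda,(\alpha_1+\alpha_2)\lambda)$ for which the sum is $\gg (c/\text{something})^k$ times the dominant term, uniformly over $\tau$ in the allowed interval; the positivity of the residues $m_\rho$ is what lets Tur\'an's method give a lower bound for all such $\tau$ simultaneously rather than merely for one. Calibrating $\alpha_1,\alpha_2$ against the number of zeros $\nu$ and against the trivial/error terms yields $|J(\tau,k,A)|>e^{-b_0\lambda}$ for a suitable $b_0=b_0(\alpha_1,\alpha_2)$, provided $c_1$ is chosen small enough in terms of $\theta_0$ so that the short-interval coefficient bound and the residue expansion are both valid.

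The main obstacle I anticipate is the bookkeeping that reconciles the two evaluations: one must choose $k$ (hence $\alpha_1,\alpha_2$) large enough that Tur\'an's inequality dominates the accumulated error terms — the $O(\log D(|t_0|+2))$ from \eqref{372.1}, the trivial-zero contributions bounded via \eqref{trivial}, and the tail from the contour shift — yet the same $k$ forces the $n$-range $[e^{kA},e^{3kA}]$ to grow, and one must keep $kA \ll \epsilon'^{-1}\log D$-type constraints so Hypothesis \ref{HypothesisA} still applies; threading $\lambda\le c_1\log D$ with $c_1$ small through all of these simultaneously, while keeping the exponent $b_0$ independent of $D$ and $t_0$, is the delicate part. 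The condition $|t_0|\ge E\lambda/\log D$ is used to push the $s=1$ pole (and $s=0$) far enough from the contour that it does not spoil the lower bound.
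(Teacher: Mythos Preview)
The paper does not prove this lemma at all; it simply cites Fogels' original papers \cite[Lemma 16]{Fogels2} and \cite[\S6]{Fogels}. What Fogels actually does is essentially your steps 2 and 3: shift the contour in \eqref{actJdef} to the left (only as far as the strip $\sigma>1-\theta_0$ on which $F$ is assumed meromorphic), pick up the residues $m_\rho\bigl(\frac{e^{3A(\rho-1-i\tau)}-e^{A(\rho-1-i\tau)}}{2A(\rho-1-i\tau)}\bigr)^k$, bound the remaining integral using condition (ii), and then apply a Tur\'an-type power-sum inequality, with the number of terms controlled by condition (i). So the architecture of your steps 2--3 is correct.

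Your step 1, however, is a genuine misplacement. Lemma \ref{Fogels pole} is stated for an \emph{abstract} meromorphic $F$ satisfying only \eqref{Linnikd} and \eqref{372.1}; no Dirichlet-series representation is assumed, and Hypothesis \ref{HypothesisA} plays no role whatsoever in its proof. The Dirichlet-series expansion of $J$ (the formula $J(\tau,k,A)=\sum_n \Lambda(n)a_G(n)n^{-1-i\tau}R(n,k,A)$) and Hypothesis \ref{HypothesisA} enter only \emph{after} the lemma has been established, in \S\ref{Vsec}--\S\ref{logfs}, where one needs an \emph{upper} bound on sums of $|J(\tau_j,k_0,A)|$ over many $\tau_j$ in order to bound $V$. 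The lemma itself supplies a \emph{lower} bound on a single $|J|$, and that comes purely from the residue side. In particular, your worry about ``threading $\lambda\le c_1\log D$'' so that Hypothesis \ref{HypothesisA} applies to the $n$-range $[e^{kA},e^{3kA}]$ is a constraint for Theorem \ref{thm3}, not for Lemma \ref{Fogels pole}; this is exactly why the paper stresses that $\alpha_2$ may be taken arbitrarily large (so that later one can enforce $\alpha_2\ge\max\{2/\epsilon_\piup,2/\epsilon_{\piup'}\}$), but that freedom is a feature of the lemma's statement, not an ingredient of its proof.
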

\begin{proof}
See \cite[Lemma 16]{Fogels2} and \cite[Section 6, p. 75]{Fogels}.
\end{proof}
Note that in Lemma \ref{Fogels pole}  $\alpha_2$ can be chosen arbitrarily large. We shall exploit this fact when we use Hypothesis \ref{HypothesisA} in the proof of Theorem \ref{thm3}.

\section{Proof of the log-free density theorem}
\subsection{Basic set up}  We shall prove the following equivalent version of Theorem \ref{thm3} by  following closely
the method devised by Fogels in \cite{Fogels}.
\medskip\par
\noindent{\bf Theorem 1.2} {\it Assume that Hypothesis \ref{HypothesisA} holds for $\piup$ and $\piup^\prime$ and that either $m, m^\prime\leq 2$ or at least one of $\piup$ and $\piup^\prime$ is self-dual. Then there exist positive constants $c$ and $T_0$ (depending on $\piup$ and $\piup^\prime$) such that for any $T\geq T_0\geq \max\{3, Q_{\piup\times\piup^\prime}\}$ and $\lambda\in [0, \log{T}]$, we have}
$$N_{\piup\times\piup^\prime}(1-\lambda/\log{T}, T)\leq e^{c\lambda}.$$

We start by considering 
$$G(s)= \prod_{p\mid (q_{\piup}, q_{\piup^\prime})} \prod_{\substack{{j,k}\\{1\leq j \leq m}\\{1\leq k\leq m^\prime}}}\left(1-\frac{\alpha_{\piup\times\piup^\prime}(j,k, p)}{p^s}  \right)L(s,\piup\times\piup^\prime).$$
We observe that $N_{ \piup\times\piup^\prime}(\sigma, T)\leq N_{G }(\sigma, T)$, whence the result follows by showing that  $N_{G}(1-\lambda/\log{T}, T)\leq e^{c\lambda},$ for a suitable positive $c$.

We continue by bounding $N_{G}(1- \lambda/\log T, T)$, the number of zeroes of $G(s)$ in the rectangle $$R(\lambda, T) = \{s=\sigma+it:\, 1- \lambda/\log T \leq \sigma \leq 1, \, -T \leq t \leq T\},$$ by covering $R(\lambda, T)$ with squares
\begin{equation*}
S(\lambda, t_{0}, T) =  \{s=\sigma+it:\, 1- \lambda/\log T \leq \sigma \leq 1, \, |t-t_{0}| \leq \lambda/2\log T\},
\end{equation*}
and applying Lemma \ref{Fogels pole} to each square $S(\lambda, t_{0}, T)$ with sufficiently large $t_0$. 

\subsection{Application of Fogels' lemma} First, note that the poles of 
$\frac{G^\prime}{G}(s)$ occur at the zeroes of $G(s)$ and possibly at $s=1$.  Moreover the poles at $s\neq 1$ have positive residues and the possible pole at $s=1$ has residue $-1$. By Lemmas \ref{LinLem} and \ref{SecondLem}, $\frac{G^\prime}{G}(s)$ satisfies \eqref{Linnikd} and \eqref{372.1} if $(q_\piup, q_{\piup^\prime})=1$. Using the explicit expression for zeroes of $$\prod_{p\mid (q_{\piup}, q_{\piup^\prime})} \prod_{\substack{{j,k}\\{1\leq j \leq m}\\{1\leq k\leq m^\prime}}}\left(1-\frac{\alpha_{\piup\times\piup^\prime}(j,k, p)}{p^s}  \right)$$ we can show results analogous to Lemma \ref{LYlemma} hold for $G$ and therefore results analogous to Lemmas \ref{LinLem} and \ref{SecondLem} also hold. Hence $\frac{G^\prime}{G}(s)$ is suitable for the application of Fogels' lemma.  Second, by adjusting the constant in the zero-free region obtained in Lemma \ref{zerofree} we can find a constant $c_0$ such that  $N_G(1- c_0/\log T, T)\leq 1$ for $T\geq Q_{\piup\times\piup^\prime}$. Therefore we can apply Lemma \ref{Fogels pole} with $F(s)=\frac{G^\prime}{G}(s)$, $c_0$, $D=T$, $\theta_0=1$, an appropriate $\alpha_1$, and $\alpha_2\geq \max\{\alpha_1^2, 2/\epsilon_\piup, 2/\epsilon_{\piup^\prime}\}$. Hence there are positive constants $c_1\leq 1$, $b_0$ (depending only on $\alpha_1$ and $\alpha_2$), and an integer $k\geq 2$ ($\alpha_2\leq k\lambda^{-1}<\alpha_1+\alpha_2$) such that for $\lambda\in [c_0, c_1\log{T}]$ we have
\begin{equation}
\label{Jdef} 
|J(\tau, k, \lambda^{-1}\log T| > e^{-b_0\lambda},
\end{equation}
whenever the square $S(\lambda, t_0, T)$ contains a zero of $G(s)$ and $|t_0|>E\lambda/\log{T}$. (Recall that $E$ is given in Lemma \ref{Fogels pole} and $\tau$ is any number in the interval $[t_0-\lambda/2\log{T}, t_0+\lambda/2\log{T}]$.) Note that if $\lambda<c_0$ we have $N_G(1- \lambda/\log T, T)\leq 1$. Also, if $\lambda>c_1\log{T}$ by Lemma \ref{LYlemma}(a) the theorem follows since
$$N_G(1- \lambda/\log T, T) \ll T^2 \ll e^{(2/c_1)\lambda},$$
for $T$ sufficiently large.
Henceforth in order to prove Theorem \ref{thm3} we only need to assume that $\lambda\in [c_0, c_1\log{T}]$.

Next we observe that $$R(\lambda, T)=\bigcup_{0\leq m \leq [2T\log T/\lambda]} S(\lambda, t_m, T),~{\rm where}~t_m=-T + \left(m+ {1}/{2}\right)\lambda/\log T.$$
Note that $t_{m} = -T + (m+ \frac{1}{2})\lambda/\log T$ the midpoints on the squares are, where $0\leq m\leq [2T\log T/\lambda]$. Select a number $\tau\in [t_m-\lambda/2\log{T}, t_m+\lambda/2\log{T}]$ from each square $S(\lambda, t_m, T)$ to act as the representative of that square. More precisely choose a representative  $\tau$ that differs from $-T$ by an integer multiple of $c'/\log T$, where $c'< c_{0} $ is a sufficiently small constant (the size of which will be specified later). 
On each square with $|t_m|>E\lambda/\log{T}$ that contains a zero of $G(s)$ the result (\ref{Jdef}) now applies.  

Now consider all the values of $k$ that appear in the squares $S(\lambda, t_{m}, T)$ with $|t_m|>E\lambda/\log{T}$ after application of Lemma \ref{Fogels pole}. Let $k_{0}$ be the most frequently occurring value of $k$ (or one of the most frequently occurring values) in (\ref{Jdef}), and suppose $k_{0}$ appears $V$ times. Then the $\tau$'s chosen from the boxes in which $k_{0}$ appears can be denoted by
\begin{equation}\label{taujdef}
\tau_{j} = -T + w_{j}, \quad\quad 1\leq j\leq V.
\end{equation}

It is now relatively easy to estimate the total number of zeroes in $R(\lambda, T)$. By Lemma \ref{LinLem}, the number of zeroes in a single box $S(\lambda, t_{m}, T)$ is $\ll \frac{\lambda}{\log T} \log T(|t_{m}|+2) \ll \lambda$. There are finitely many boxes $S(\lambda, t_{m}, T)$ for which $|t_m|\leq E\lambda/\log{T}$, so the total number of zeroes in such boxes is bounded by $\lambda$. For boxes for which $|t_m|> E\lambda/\log{T}$, there are $V$ boxes in which $k_{0}$ appears. Since the number of choices of $k$ is $\ll \lambda$ (recall that $\alpha_2<k\lambda^{-1}<\alpha_1+\alpha_2$) then
\begin{equation}\label{firstden}
N_{G}(1-\lambda/\log T, T) \ll \lambda+\lambda^{2} V.
\end{equation}
We next find an estimation for $V$.

\subsection{An estimate on \texorpdfstring{$V$}{Lg}}\label{Vsec}
For $\sigma>1$
\begin{equation*}\label{logder}
-\frac{G'}{G}(s) = \sum_{n=1}^{\infty} \frac{\Lambda(n)a_{G}(n)}{n^{s}},
\end{equation*}
where 
$$a_G(n)=\left\{ \begin{array}{cc} a_\piup(n) a_{\piup^\prime}(n)&{\rm if}~(n, (q_\piup, q_{\piup^\prime}))=1,\\ 0&{\rm otherwise.}\end{array} \right.$$
This, together with (\ref{actJdef}), shows that
\begin{equation*}
J(\tau, k, A) = \sum_{n=1}^{\infty} \frac{\Lambda(n)a_{G}(n)}{n^{1+ i\tau}}R(n, k, A),
\end{equation*}
where 
\begin{equation*}
R(n, k, A) = \frac{1}{2\piup i} \int_{2-i\infty}^{2+ i\infty}\left(\frac{e^{3As} - e^{As}}{2As}\right)^{k} n^{-s}\, ds.
\end{equation*}
(Recall that $A=\lambda^{-1} \log{T}$.) From \cite[Lemma 17]{Fogels2} we know that
\begin{equation}\label{Rsize}
|R(n, k, A)| \leq \begin{cases} e^{c_2 k}/A &\mbox{if } e^{kA} < n < e^{3kA}, \\
0 &\mbox{otherwise},
\end{cases}
\end{equation}
where $c_2>1$.
After setting $B=k_0\lambda^{-1}$ it follows from (\ref{Jdef}) that, for $k_0$ and $\tau_j$ ($1\leq j \leq V$) defined above,
\begin{equation*}\label{146}
\bigg| \sum_{T^{B} < n < T^{3B}} \frac{\Lambda(n) a_{G}(n)R(n, k_0, A)}{n^{1+ i\tau_j}}\bigg| > e^{-b_0\lambda},
\end{equation*}
whence
\begin{equation*}
\sum_{1 \leq j \leq V} \left| \sum_{T^{B} < n < T^{3B}} \frac{\Lambda(n)a_{G}(n) R(n, k_0, A)}{n^{1+ i\tau_j}} \right|> Ve^{-b_0\lambda}.
\end{equation*}
An application of the Cauchy--Schwartz inequality and the invocation of (\ref{taujdef})
show that
\begin{equation*}
\begin{split}
V &\leq e^{2b_0\lambda} \sum_{T^{B} < n < T^{3B}} \frac{\Lambda(n)a_{G}(n) R(n, k_0, A)}{n^{1 - iT}} \sum_{T^{B} < m < T^{3B}} \frac{\Lambda(m)\overline{a_{G}(m)}~ \overline{R(m, k_0, A)}}{m^{1  +iT}}  \sum_{1 \leq j \leq V} \left(\frac{m}{n}\right)^{iw_{j}} \\
& \leq 2 e^{2b_0\lambda} \sum_{T^{B} < n \leq m < T^{3B}} \frac{\Lambda(n)\Lambda(m) | a_{G}(n) \overline{a_{G}(m)}| |R(n, k_0, A)\overline{R(m, k_0, A)}|}{nm} \bigg| \sum_{1 \leq j \leq V} \left(\frac{m}{n}\right)^{iw_{j}} \bigg|.\\
\end{split}
\end{equation*}
By employing the bound \eqref{Rsize} in the above inequality we deduce that
\begin{equation}\label{Fog20}
V\leq \frac{\lambda^{2}e^{c_3\lambda}}{\log^2 T} \sum_{T^{B} < n \leq m < T^{3B}} \frac{\Lambda(n)\Lambda(m)|a_{G}(n)||a_{G}(m)|}{nm} \bigg| \sum_{1 \leq j \leq V} \left(\frac{m}{n}\right)^{iw_{j}} \bigg|.
\end{equation}
In (\ref{Fog20}) the numbers $w_{j}$ run over $V$ terms (those corresponding to the integer $k_{0}$). If instead the $w_{j}$'s are allowed to run over \textit{all} integer multiples of $c'/\log T$ less than $2T$ then the final sum in (\ref{Fog20}) is a geometric progression.

For a fixed $n\in (T^{B}, T^{3B})$, write $\frac{m}{n} = e^{\mu}$, where $0\leq \mu < 2B \log T$. Write
\begin{equation*}
M_{\ell} = \left\{\mu;~ \frac{\ell}{2T} \leq \mu < \frac{(\ell+1)}{2T} \right\}, \quad\quad 0\leq \ell \ll T \log T,
\end{equation*}
so that the union of the $M_{\ell}$'s covers all possible sums arising in (\ref{Fog20}). Certainly the contribution to the sum $\sum_{1 \leq j \leq V}(m/n)^{iw_{j}}$ from the interval $M_{0}$ is $\ll T\log T$.
For $\ell\neq 0$ we use the estimate
\begin{equation}\label{Prach}
\sum_{n\leq N} e^{in\phi} \ll \frac{1}{\min\{\phi, 2\pi - \phi\}},
\end{equation}
for $0< \phi< 2\pi$. Recall that $w_{j} = jc'/\log T$ for $1 \leq j \ll T\log T$ and that $c'$ has to be smaller than $c_{0}$. If, in addition, $c'$ is sufficiently small to ensure that $\frac{c'}{\log T} 2B \log T <\pi$, then it follows that $w_{1}\mu < \frac{c'}{\log T} 2B \log T <\pi$. Therefore one may use (\ref{Prach}) to show that
\begin{equation}\label{lastw}
\sum_{1 \leq j \leq V} \left(\frac{m}{n}\right)^{iw_{j}} = \sum_{1\leq j \leq V} e^{\frac{i\mu j c'}{\log T}} \ll \left(\frac{c'}{\log T} \frac{\ell}{2T}\right)^{-1} \ll \frac{T\log T}{\ell}.
\end{equation}

Now if $m/n = e^{\mu} \in M_{\ell}$ then $ne^{\ell/2T} \leq m < n e^{\ell/2T} e^{1/2T}$, so that $m$ is in the interval $\mathcal{M}_{\ell} = [x_{\ell}, x_{\ell} \exp(1/2T))$, where $x_{\ell} = ne^{\ell/2T}$. We observe that since $B\geq \alpha_2\geq\max\{2/\epsilon_{\piup}, 2/\epsilon_{\piup^\prime}\}$ then 
$$x_\ell (e^{1/2T}-1)\geq \frac{x_\ell}{2T}\geq\max\{x_\ell^{1-\epsilon_{\piup}}, x_\ell^{1-\epsilon_{\piup^\prime}}\}$$ and thus, by the Cauchy--Schwartz inequality, Hypothesis \ref{HypothesisA} for $\piup$ and $\piup^\prime$ may be applied to show that
\begin{equation*}
\sum_{m \in \mathcal{M}_{\ell}} \frac{\Lambda(m)|a_{G}(m)|}{m}\leq \left\{\sum_{m \in \mathcal{M}_{\ell}} \frac{\Lambda(m)|a_{\piup}(m)|^2}{m}\right\}^{1/2} \left\{\sum_{m \in \mathcal{M}_{\ell}} \frac{\Lambda(m)|a_{\piup^\prime}(m)|^2}{m}\right\}^{1/2}\ll T^{-1},
\end{equation*}
whence, by (\ref{lastw})
\begin{equation*}
\sum_{m \in \mathcal{M}_{\ell}} \frac{\Lambda(m)|a_{G}(m)|}{m} \bigg| \sum_{1\leq j \leq V} \left(\frac{m}{n}\right)^{iw_{j}}\bigg| \ll \begin{cases} \log T & \mbox{if } \ell= 0\\
\ell^{-1}\log T & \mbox{if } 1 \leq \ell \ll T\log T.
\end{cases}
\end{equation*}
Adding these intervals gives
\begin{equation*}
\sum_{n \leq m < T^{3B}} \frac{\Lambda(m)|a_{G}(m)|}{m} \bigg| \sum_{1\leq j \leq V} \left(\frac{m}{n}\right)^{iw_{j}}\bigg| \ll \log^{2} T.
\end{equation*}
The above estimate together with part (b) of Lemma \ref{LYlemma2} and the Cauchy--Schwartz inequality yield
\begin{eqnarray}\label{lastsec1}
\sum_{T^{B} <n \leq m < T^{3B}} \frac{\Lambda(n)\Lambda(m)|a_{G}(n)||a_{G}(m)|}{nm} \bigg| \sum_{1\leq j \leq V} \left(\frac{m}{n}\right)^{iw_{j}}\bigg| 
&\ll& \log^{2}T \sum_{T^{B} < n < T^{3B}} \frac{\Lambda(n)|a_{G}(n)|}{n}\nonumber \\
&\ll& \log^{3}T.
\end{eqnarray}
Equations (\ref{lastsec1}) and (\ref{Fog20}) show that
\begin{equation}\label{24}
V \leq e^{c_4\lambda} \log T
\end{equation}
for some positive constant $c_4$.

\subsection{A refinement that gives a log-free result}\label{logfs}
In order to eliminate the factor of $\log T$ in \eqref{24} we assume that $V> e^{4c_{4}\lambda}$, since otherwise there is nothing to prove. It follows from this assumption that $V< \log^{2}T$.

We plan on estimating the sum in (\ref{lastsec1}), that is,
\begin{equation}\label{E}
\Sigma = \sum_{T^{B} <n \leq m < T^{3B}} \frac{\Lambda(n)\Lambda(m)|a_{G}(n)||a_{G}(m)|}{nm} |S(m,n)|,
\end{equation}
where $S(m,n)$ is defined in \eqref{Smn}.
We show that $\Sigma \ll V^{\eta} \log^{2}T$, for some positive $\eta<1$. It then follows from (\ref{Fog20}) that $V \ll e^{c_4\lambda} V^{\eta}$, where $c_4$ is the constant given in \eqref{24}, and so $V \ll e^{(c_4/(1-\eta))\lambda}$ which proves Theorem \ref{thm3}.

Fix an integer $m_{0} \in (T^{B}, T^{3B})$, let $M_{m_0} = [m_{0}, m_{0} \exp(V^{-1/4}\log T)]$ and let $I_{m_0}$ be the logarithm of the interval $M_{m_0}$ --- that is $I_{m_0} = [\log m_{0}, \log m_{0} + V^{-1/4}\log T]$. 
Now we cover the interval $(T^{B}, T^{3B})$ by intervals $M_{m_0}$; we need at most $2BV^{1/4}$ such intervals. Consider the intervals $I_{m_0}$ corresponding to the intervals $M_{m_0}$. Divide each $I_{m_0}$ into $[T\log T]$ equal parts such that each part has length at most $h/(TV^{1/4})$, where $1\leq h <2$. Label these intervals $I_{m_0, 1}, I_{m_0, 2}, \ldots, I_{m_0, [T\log T]}.$ We now consider all pairs $(m, n)$ such that $\log{m}\in I_{m_0, i}$ for some $m_0$, $i$, and for which there exists $\mu_m \in  I_{m_0, i}$ such that $|g(\mu_m, n)|\leq V^{7/8}$, where $g(\mu_m, n)$ is defined in \eqref{gmn}. We denote such a pair by $(m, n)^*$. 
Since $|\log{m}-\mu_m|<2/(TV^{1/4})$, Lemmas \ref{Sarastro} and \ref{LYlemma2}(b) give the following estimate for a part of (\ref{E})
\begin{equation}\label{Tamino}
\sum_{\substack{T^{B}<n\leq m < T^{3B} \\ (m, n)^{*}}} \frac{\Lambda(n)\Lambda(m)|a_{G}(n)||a_{G}(m)|}{nm} |S(m, n)| \ll V^{7/8}\log^{2}T.
\end{equation}

Now suppose that $(m, n)$ is not an $(m, n)^*$. For such $m$ we have $|g(\mu, n)|>V^{7/8}$ for all $\mu\in I_{m_0, i}$, where $\log{m}\in I_{m_0, i}$. We denote such $m$ by $m^{\dagger}(n)$. Now for fixed $n$ we have the following estimation.
\begin{equation}
\label{4.12}
\sum_{\substack{T^{B}< m < T^{3B} \\ m=m^{\dagger}(n)}} \frac{\Lambda(m)|a_{G}(m)|}{m}\leq \sum_{m_0,i}^{\prime} \sum_{\substack{T^{B}< m < T^{3B} \\ \log{m}\in I_{m_0, i}}} \frac{\Lambda(m)|a_{G}(m)|}{m},
\end{equation}
where $\displaystyle{\sum_{m_0,i}^{\prime}}$ is a sum over all intervals $I_{m_0, i}$ with the property that
$|g(\mu, n)|\geq V^{7/8}$ for $\mu\in I_{m_0, i}$. Let $x_0$ be such that if $x_0\leq m\leq x_0 \exp(h/TV^{1/4})$ then $\log{m}\in I_{m_0, i}$. 
Since  $B\geq\alpha_2\geq\max\{2/\epsilon_{\piup}, 2/\epsilon_{\piup^\prime}\}$, $1\leq h< 2$, and $V\leq \log^2{T}$,  then 
$$x_0 (e^{h/(TV^{1/4})}-1)\geq \frac{x_0 h}{TV^{1/4}}\geq\max\{x_0^{1-\epsilon_{\piup}}, x_0^{1-\epsilon_{\piup^\prime}}\},$$ and thus, by employing the Cauchy--Schwartz inequality, we may apply Hypothesis \ref{HypothesisA} for $\piup$ and $\piup^\prime$ to get
\begin{equation}
\label{4.13}
\sum_{\substack{T^{B}<m < T^{3B} \\ \log{m}\in I_{m_0, i}}} \frac{\Lambda(m)|a_{G}(m)|}{m}\leq \sum_{\substack{ T^B<x_0\leq m\leq x_0 \exp(h/TV^{1/4})}} \frac{\Lambda(m)|a_{G}(m)|}{m}\ll \frac{1}{TV^{1/4}}.
\end{equation}
From Lemma \ref{Pamina} and $e^{c_4\lambda}<V^{1/4}$ we conclude that 
\begin{eqnarray}
\label{4.14}
\#\{I_{m_0, i};~|g(\mu, n)|\geq V^{7/8}~{\rm for}~\mu\in I_{m_0, i}\}&\ll& (TV^{1/4})\left(V^{-3/4}(V^{-1/4} \log{T}+e^{c_4 \lambda} \log{T} )  \right)  \nonumber  \\
&\ll& TV^{-1/2}e^{c_4\lambda}\log{T}\nonumber\\
&<& TV^{-1/4} \log{T}.
\end{eqnarray}
Since the total number of $m_0$ is at most $2BV^{1/4}$, we can apply \eqref{4.13} and \eqref{4.14} to \eqref{4.12}, whence
\begin{equation*}
\sum_{\substack{T^{B}< m < T^{3B} \\ m=m^{\dagger}(n)}} \frac{\Lambda(m)|a_{G}(m)|}{m} \ll V^{-1/4} \log{T}.
\end{equation*}
Thus the remaining part of (\ref{E}) becomes
\begin{eqnarray}\label{E2}
\sum_{\substack{T^{B}<n\leq m < T^{3B} \\ (m, n)\neq (m, n)^{*}}} \cdots  &\ll& 
V \sum_{T^{B} < n < T^{3B}} \frac{\Lambda(n)|a_{G}(n)|}{n} \sum_{\substack{T^{B}\leq m < T^{3B} \\ m=m^{\dagger}(n)}} \frac{\Lambda(m)|a_{G}(m)|}{m}\nonumber\\
&\ll&V^{3/4} \log^2{T}.
\end{eqnarray}

Hence (\ref{Tamino}) and \eqref{E2} show that $\Sigma \ll V^{7/8}\log^{2}T$ as desired. This bound for $\Sigma$ in combination with (\ref{Fog20}) yields $V \ll e^{c_4\lambda} V^{7/8}$, whence $V \ll e^{8c_4\lambda}$. This, together with \eqref{firstden}, proves the theorem.


\section{Applications}
The following version of Perron's formula is proved in \cite[Theorem 2.1]{LY2}.
\begin{Lem}[{\bf Liu--Ye}]
\label{Liu-Ye}
Let $f(s)=\sum_{n=1}^{\infty} \frac{a_n}{n^s}$ be an absolutely convergent series in the half-plane $\sigma>\sigma_a$. Let $B(\sigma)=\sum_{n=1}^{\infty} \frac{|a_n|}{n^\sigma}$ for $\sigma>\sigma_a$. Then for $b>\sigma_a$, $X\geq 2$, $T\geq 2$, and $H\geq 2$,
\begin{eqnarray*}
\label{Perron}
\sum_{n\leq X} a_n &=&\frac{1}{2\pi i} \int_{b-iT}^{b+iT} f(s) \frac{X^s}{s}ds+
O\left( \sum_{X-X/H<n\leq X+X/H} |a_n|\right)+O\left( \frac{HX^bB(b)}{T} \right).
\end{eqnarray*}
\end{Lem}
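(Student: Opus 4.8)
The plan is to derive the formula from the classical truncated Perron formula, paying careful attention to the bookkeeping of the two error terms. First I would use the absolute and uniform convergence of $f(s)=\sum_{n\ge1}a_n n^{-s}$ on the line $\Re s=b>\sigma_a$ to interchange summation and integration:
\[
\frac{1}{2\pi i}\int_{b-iT}^{b+iT} f(s)\,\frac{X^s}{s}\,ds=\sum_{n=1}^{\infty}a_n\,I\!\left(\frac Xn\right),\qquad I(y):=\frac{1}{2\pi i}\int_{b-iT}^{b+iT}\frac{y^{s}}{s}\,ds.
\]
Then I would invoke the standard effective evaluation of the discontinuous integral (proved by shifting the contour to $-\infty$ when $y>1$ and to $+\infty$ when $0<y<1$, and trivially bounding the horizontal segments): for $y\ne1$,
\[
I(y)=\mathbf 1_{\{y>1\}}+O\!\left(y^{b}\min\Big\{1,\tfrac{1}{T|\log y|}\Big\}\right),\qquad I(1)=\tfrac12+O\!\left(\tfrac bT\right).
\]

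Substituting this, the ``$1$'' contributions give $\sum_{n<X}a_n$, and the $y=1$ case (when $X\in\mathbb Z$) gives $\tfrac12 a_X$; since $X\in(X-X/H,\,X+X/H]$, passing from $\sum_{n<X}a_n+\tfrac12 a_X$ to $\sum_{n\le X}a_n$ costs only $O(|a_X|)$, which I would absorb into the first error term. What remains is to bound
\[
R:=\sum_{n=1}^{\infty}|a_n|\left(\frac Xn\right)^{b}\min\Big\{1,\tfrac{1}{T|\log(X/n)|}\Big\},
\]
which I would split as $R=R_1+R_2$ according to whether $|n-X|\le X/H$ or $|n-X|>X/H$. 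In $R_1$ the ratio $X/n$ lies in a fixed compact subinterval of $(0,\infty)$ (using $H\ge2$), so $(X/n)^{b}\ll1$ and, bounding the minimum by $1$, $R_1\ll\sum_{X-X/H<n\le X+X/H}|a_n|$. In $R_2$ I would use the elementary inequality $|\log(X/n)|\ge\tfrac1{2H}$, valid for every $n$ with $|n-X|>X/H$ — the estimate $\log(1+x)\ge x/2$ for $0<x\le1$ handles $n$ near $X$, while $|\log(X/n)|\ge\log2$ handles $n$ far from $X$ — so that the minimum is $\le2H/T$ and hence
\[
R_2\le\frac{2H}{T}\sum_{n=1}^{\infty}|a_n|\left(\frac Xn\right)^{b}=\frac{2HX^{b}B(b)}{T}.
\]
Combining the contributions then yields the asserted identity.

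The step I expect to require the most care is the bound for $R_2$: the genuinely singular behaviour of $I(X/n)$ is confined to those integers within relative distance $1/H$ of $X$ — precisely the range isolated in the first error term — and for every other $n$ the key observation is that the crude bound $1/|\log(X/n)|\le2H$ is uniform in $n$, so that when it is multiplied against $|a_n|(X/n)^{b}$ and summed, the absolute convergence of $B(b)=\sum_n|a_n|n^{-b}$ controls both the tail $n\to\infty$ and the small-$n$ range, where $(X/n)^{b}$ is large. There is also a minor point that some implied constants will depend on $b$; this is immaterial, since $b$ is fixed — and close to $1$ — in every application in this paper. The remaining ingredients, namely the contour shift defining $I(y)$ and the estimation of the horizontal integrals, are entirely routine.
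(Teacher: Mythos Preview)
Your argument is correct and follows the standard derivation of Perron's formula with a short-interval error term; the splitting at $|n-X|=X/H$ and the uniform bound $|\log(X/n)|\gg 1/H$ on the complement are exactly the points that produce the two stated error terms. The paper itself gives no proof of this lemma---it simply cites \cite[Theorem 2.1]{LY2}---so there is nothing to compare beyond noting that your sketch is essentially the classical proof, which is also what Liu and Ye record.
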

This version of Perron's formula is useful when one has a suitable upper bound for the average of the coefficients $|a_n|$ over short intervals.
\begin{proof}
[Proof of Theorem \ref{thm4}]
An application of Lemma \ref{Liu-Ye} for $b=1+1/\log{X}$, $H=T^{\epsilon_\piup}$, and $$f(s)=-\frac{L^\prime}{L}(s, \piup\times\tilde{\piup})=\sum_{n=1}^{\infty} \frac{\Lambda(n) a_{\piup\times\tilde{\piup}}(n)}{n^s}$$
results in 
\begin{eqnarray}
\label{Perron2}
\psi_{\piup\times\tilde{\piup}}(X)&=& \frac{1}{2\pi i} \int_{b-iT}^{b+iT} -\frac{L^\prime}{L}(s, \piup\times\tilde{\piup}) \frac{X^s}{s}ds+O\left( \sum_{X-X/T^{\epsilon_\piup}<n\leq X+X/T^{\epsilon_\piup}} \Lambda(n) |a_{\piup\times \tilde{\piup}}(n)|\right)\nonumber\\
&&+O\left( \frac{X\log{X}}{T^{1-\epsilon_\piup}} \right).
\end{eqnarray}
(Note that by Lemma \ref{LYlemma2}(b) we have $B(\sigma)\ll 1/(\sigma-1)$ if $1<\sigma\leq 2$.) Now we assume that $1\leq T\leq X$, whence by Hypothesis \ref{HypothesisA} and the bound \eqref{trivial} for unramified primes, the first error term in the above formula is $O(X/T^{\epsilon_\piup})+O(X^{1-2/(m^2+1)}(\log{X})^2)$. Without loss of generality let us assume that $0<\epsilon_\piup\leq 1/2$. We can rewrite \eqref{Perron2} as
\begin{equation*}
\psi_{\piup\times\tilde{\piup}}(X)= \frac{1}{2\pi i} \int_{b-iT}^{b+iT} -\frac{L^\prime}{L}(s, \piup\times\tilde{\piup}) \frac{X^s}{s}ds+O\left( \frac{X\log{X}}{T^{\epsilon_\piup}}\right)+O\left(X^{1-{2}/{(m^2+1)}}(\log{X})^2 \right).
\end{equation*}
A standard argument involving moving the line of integration in the above formula to the half-plane $\Re(s)<0$ and computing the residues at $s=1$, $s=\rho$ (zeroes of $L(s, \piup\times\tilde{\piup})$), and $s=0$ (see \cite[Chapter 17]{D} for details) implies that for $2^{\frac{1}{\epsilon_\piup}}\leq T\leq X$ we have 
 \begin{equation}
\label{last} 
\psi_{\piup\times\tilde{\piup}}(X)= X-\sum_{\substack{{\rho=\beta+i\gamma}\\|\gamma|\leq T}} \frac{X^\rho}{\rho}+O\left( \frac{X\log{X}}{T^{\epsilon_\piup}}\right)+O(X^{\mu_0}),
\end{equation}
where $\mu_0$ can be taken as any number in the interval $(1-2/(m^2+1),1)$. The last error term comes from $\sum X^\mu/\mu$ where $\mu$ ranges over the trivial zeroes of $L(s, \piup\times\tilde{\piup})$. Note that by \eqref{trivial} this sum is bounded by $X^{\mu_0}$. 
Following the argument on \cite[p.\ 257]{Ingham1937} from the explicit formula \eqref{last} we derive (for $Y\leq X$)
\begin{equation}
\label{eformula}
\frac{1}{Y}\left( \psi_{\piup\times\tilde{\piup}}(X+Y)-\psi_{\piup\times\tilde{\piup}}(X)\right)= 1+O\left(X^{\beta_0-1}\right)+O\left( \sum_{\substack{{\rho\neq \beta_0}\\{|\gamma|\leq T}}} X^{\beta-1}\right)+O\left( \frac{X\log{X}}{YT^{\epsilon_\piup}}\right)+O\left( \frac{X^{\mu_0}}{Y}\right),
\end{equation} 
where $\beta_0$ is the possible exceptional zero of $L(\piup\times\tilde{\piup}, s)$. Let $\tilde{N}_{\piup\times\tilde{\piup}}(\sigma, T)$ be the number of non-exceptional zeroes in the rectangle $1\leq\beta\leq \sigma$ , $|\gamma|\leq T$. From Theorem \ref{thm3} we have 
\begin{equation}
\label{densitytilde}
\tilde{N}_{\piup\times\tilde{\piup}}(\sigma, T)\leq {N}_{\piup\times\tilde{\piup}}(\sigma, T)\leq T^{c(1-\sigma)},
\end{equation}
for a constant $c>0$, uniformly for $0\leq \sigma\leq 1$. Also from Lemma \ref{zerofree} we conclude that there are constants $T_0>0$ and $A_\piup>0$ such that for all $T\geq T_0$ and $\sigma>1-\frac{A_\piup}{\log{T}}$ we have  
$\tilde{N}_{\piup\times\tilde{\piup}}(\sigma, T)=0$. We let $\eta(T)=A_\piup/\log{T}$.
We have
\begin{eqnarray}
\sum_{\substack{{\rho\neq \beta_0}\\{|\gamma|\leq T}}} X^{\beta-1}&=&-\int_{0}^{1^+} X^{\sigma-1} d_\sigma\tilde{N}_{\piup\times\tilde{\piup}}(\sigma, T)\nonumber\\ 
&=&X^{-1}\tilde{N}_{\piup\times\tilde{\piup}}(0, T)+\int_{0}^{1} \tilde{N}_{\piup\times\tilde{\piup}}(\sigma, T) X^{\sigma-1}(\log{X}) d\sigma.\nonumber
\end{eqnarray}
Applying Lemma \ref{LYlemma}(a) and \eqref{densitytilde} gives
$$\sum_{\substack{{\rho\neq \beta_0}\\{|\gamma|\leq T}}} X^{\beta-1}=O(X^{-1}T\log{T})+O\left(\int_{0}^{1-\eta(T)} \left(\frac{T^c}{X} \right)^{1-\sigma}(\log{X}) d\sigma \right).$$
(Here we used the fact that $\tilde{N}_{\piup\times\piup^\prime} (\sigma, T)=0$ when $T\geq T_0$ and $\sigma>1-\eta(T)$.) Setting $T=X^\alpha$ in the above formula yields
$$\sum_{\substack{{\rho\neq \beta_0}\\{|\gamma|\leq T}}} X^{\beta-1}=O(\alpha X^{\alpha-1} \log{X})+O\left(\frac{\exp\left({cA_\piup-\frac{A_\piup}{\alpha}}\right)-X^{\alpha c-1}}{1-\alpha c}\right).$$
From here we see that by choosing $T=X^\alpha$ for sufficiently small $\alpha$  in \eqref{eformula} we will have
\begin{equation*}
\frac{1}{Y}\left( \psi_{\piup\times\tilde{\piup}}(X+Y)-\psi_{\piup\times\tilde{\piup}}(X) \right)\geq \frac{3}{4}-\frac{K_1 X^{1-\epsilon_\piup\alpha} \log{X}}{Y}-\frac{K_2 X^{\mu_0}}{Y}
\end{equation*}
for $X$ sufficiently large (say $X>X_0$), where $K_1$ and $K_2$ are the implied constants in the last two O-terms in \eqref{eformula}. If we choose $Y\geq \max\{8K_1X^{1-\epsilon_\piup\alpha} \log{X}, 8K_2 X^{\mu_0}\}$, then for $X>X_0$ we have
\begin{equation*}
\frac{1}{Y}\left( \psi_{\piup\times\tilde{\piup}}(X+Y)-\psi_{\piup\times\tilde{\piup}}(X) \right)\geq \frac{1}{2}
\end{equation*}
as desired. This completes the proof.
\end{proof}
\begin{proof}
[Proof of Corollary \ref{cor1.7}]
For the first statement, it is enough to note that under the assumption of GRC we have $|a_\piup(n)|\leq m$ and so $$\sum_{X<n\leq X+Y} \Lambda(n) |a_\piup(n)|^2 \ll \sum_{X<n\leq X+Y} \Lambda(n) =\psi(X+Y)-\psi(Y)\ll Y$$
for $Y\geq X^{\theta}$ with $\theta>1/2$ (See \cite[Theorem 6.6]{IK}). The lower bound is a direct consequence of Theorem \ref{thm4} and the bound \eqref{trivial} for local parameters of $\piup\times \tilde{\piup}$ at unramified primes.

For the second assertion without loss of generality assume that $Y\leq X$. A standard computation involving the classical Chebyshev functions yields
\begin{equation}
\label{oneone}
\sum_{X<n\leq X+Y} \Lambda(n) |a_\piup(n)|^2= \sum_{X<p\leq X+Y} (\log{p}) |a_\piup(p)|^2+O\left( {X^{1/2} \log{X}}\right).
\end{equation}
On the other hand by employing  \eqref{trivial} for unramified primes we have have
\begin{equation}
\label{two}
\psi_{\piup\times\tilde{\piup}}(X+Y)-\psi_{\piup\times\tilde{\piup}}(X)=\sum_{X<n\leq X+Y} \Lambda(n) |a_\piup(n)|^2+O\left(X^{1-{2}/{(m^2+1)}}(\log{X})^2\right).
\end{equation}
Substituting \eqref{oneone} in \eqref{two} and then substituting the resulting formula in the left-hand side of \eqref{eformula} and proceeding as in the proof of Theorem \ref{thm4} gives the result.
\end{proof}
\begin{proof}
[Proof of Theorem \ref{thm1.11}]
From Corollary \ref{cor1.7} we know that there exists $0<\nu_\piup<1$ such that for $X>X_0$ and $Y\geq X^\theta$ with $\nu_\piup<\theta<1$, we have
$$Y\ll \sum_{X<p\leq X+Y} (\log{p}) |a_\piup(p)|^2 \ll (\log{X}) (\#\{p;~X<p\leq X+Y~{\rm and}~a_\piup(p)\neq 0\}).$$
This proves the first assertion. Setting $X=p$ and $Y=p^\theta$ in the above inequality implies that for large $p$ we have $\#\{{\rm prime}~q;~p<q\leq p+p^\theta~{\rm and}~a_\piup(q)\neq 0\}\neq 0$. This shows that $j_\piup(p) \ll p^\theta$.
\end{proof}
Finally the results for newforms and the Ramanujan $\tau$-function follow from Theorem \ref{thm1.11} since these $L$- functions satisfy GRC.
\medskip\par
\noindent{\bf Acknowledgement}  We thank the referee for many helpful comments and suggestions. We are also grateful to Satadal Ganguly for his comments on an earlier draft of this paper. Also the first author would like to thank Adam Felix and Kumar Murty for the useful discussion regarding this work.

\end{document}